%% Use the option review to obtain double line spacing
\documentclass[11pt,preprint,authoryear]{elsarticle}

\usepackage{amsmath,amsfonts,amsthm,amssymb,mathtools,float}

%% The lineno packages adds line numbers. Start line numbering with
%% \begin{linenumbers}, end it with \end{linenumbers}. Or switch it on
%% for the whole article with \linenumbers after \end{frontmatter}.
\usepackage{lineno}
%% add this two number every line, without this equations will mess up the numbering:
\newcommand*\patchAmsMathEnvironmentForLineno[1]{%
	\expandafter\let\csname old#1\expandafter\endcsname\csname #1\endcsname
	\expandafter\let\csname oldend#1\expandafter\endcsname\csname end#1\endcsname
	\renewenvironment{#1}%
	{\linenomath\csname old#1\endcsname}%
	{\csname oldend#1\endcsname\endlinenomath}}% 
\newcommand*\patchBothAmsMathEnvironmentsForLineno[1]{%
	\patchAmsMathEnvironmentForLineno{#1}%
	\patchAmsMathEnvironmentForLineno{#1*}}%
\AtBeginDocument{%
	\patchBothAmsMathEnvironmentsForLineno{equation}%
	\patchBothAmsMathEnvironmentsForLineno{align}%
	\patchBothAmsMathEnvironmentsForLineno{flalign}%
	\patchBothAmsMathEnvironmentsForLineno{alignat}%
	\patchBothAmsMathEnvironmentsForLineno{gather}%
	\patchBothAmsMathEnvironmentsForLineno{multline}%
}

%% Bibliography 
\usepackage{natbib} \bibpunct{(}{)}{;}{author-year}{}{,}
\bibliographystyle{natbib}
\setlength{\bibsep}{0.0pt}

\newtheorem{theorem}{Theorem}
\newtheorem{corollary}{Corollary}
\newtheorem{proposition}{Proposition}
\newtheorem{remark}{Remark}

\theoremstyle{definition}

%% short cuts
\newcommand\E{\mathbb E}
\newcommand\Pb{\mathbb P}
\newcommand\R{\mathbb R}
\newcommand\Dc{\mathcal D}
\newcommand\Xc{\mathcal X}
\newcommand\Gc{\mathcal G}
\newcommand\Hc{\mathcal H}
\newcommand\Fc{\mathcal F}
\newcommand\Vc{\mathcal V}
\newcommand\Ec{\mathcal E}
\newcommand\Lc{\mathcal L}
\newcommand\Rc{\mathcal R}
\newcommand\Pc{\mathcal P}
\newcommand\eff{\mathrm{eff}}

%% additional usepackagegs
\usepackage{hyperref}
\hypersetup{
	colorlinks,
	citecolor=Blue,
	linkcolor=Red,
	urlcolor=Red}
\usepackage[capitalise]{cleveref}
\usepackage[style=american]{csquotes} 

\usepackage{graphicx}
\usepackage{caption}
\usepackage{subcaption}
\usepackage{bbm}

\newcommand\blfootnote[1]{%
  \begingroup
  \renewcommand\thefootnote{}\footnote{#1}%
  \addtocounter{footnote}{-1}%
  \endgroup
}
\usepackage[hang,flushmargin]{footmisc}

\topmargin -0.5cm
\textheight 21cm
\textwidth 13.0 cm

\begin{document}
	
\begin{frontmatter}
	
    \title{A Wright-Fisher graph model and the impact of directional selection on genetic variation}
	
	\author{Ingemar Kaj\(^{a,\ast}\), Carina F.\ Mugal\(^{b,c}\), Rebekka Müller\(^a\)}
\end{frontmatter}
\vspace*{-0.5cm}
\begin{center}
    \today
\end{center}
\vspace{0.5cm}

\section*{Abstract} 

We introduce a multi-allele Wright-Fisher model with non-recurrent, reversible mutation and directional selection. In this setting, the allele frequencies at a single locus track the path of a hybrid jump-diffusion process with state space given by the vertex and edge set of a graph. Vertices represent monomorphic population states and edge-positions mark the biallelic proportions of ancestral and derived alleles during polymorphic segments. 
We derive the stationary distribution in mutation-selection-drift equilibrium and obtain the expected allele frequency spectrum under large population size scaling. 
For the extended model with multiple independent loci we derive rigorous upper bounds for a wide class of associated measures of genetic variation. Within this framework we present mathematically precise arguments to conclude that the presence of directional selection reduces the magnitude of genetic variation, as constrained by the bounds for neutral evolution.

\subsection*{Keywords}

\noindent
Wright-Fisher jump-diffusion process, 
directional selection, 
mutation bias, 
genetic diversity, 
effective mutation rate, 
theoretical population genetics

\blfootnote{
	\(^a\)Department of Mathematics, Uppsala University, Uppsala, Sweden\\ 
	\(^b\)Department of Ecology and Genetics, Uppsala University, Uppsala, Sweden\\
	\(^c\)Laboratory of Biometry and Evolutionary Biology, University of Lyon 1, UMR CNRS 5558, Villeurbanne, France\\
    \(^\ast\)Corresponding author: ikaj@math.uu.se
}

\newpage
\section{Introduction}\label{S:Introduction}

The degree of genetic variation within a population is determined by the interrelations of evolutionary processes such as mutation, genetic drift and natural selection.
Mutation, the fundamental source of genetic variation, is frequently modeled as a non-recurrent mutation mechanism that initializes the segregation of an allele  in the population but otherwise does not influence the population frequency of the allele
\citep{Kimura1969,Sawyer1992,McVean1999}.
Genetic drift and natural selection, on the other hand, control the time span over which mutations segregate in a population until eventually reaching fixation or extinction.
While genetic drift ultimately acts to eliminate genetic variation, different selection mechanisms can either prolong or shorten the time to fixation or extinction. Directional selection, where one of the alleles in a given pair of allelic types has a selective advantage over the other, is commonly viewed as a force to reduce the level of genetic variation. However, as pointed out in \citet{Novak2017}, "rigorous arguments for this idea are scarce".

Population genetics modeling for the purpose of analyzing genetic variation under the combined influence of different evolutionary processes typically builds on some version of Wright-Fisher models \citep{Fisher1930,Wright1931,Wright1938} or Moran type models \citep{Moran1958}. 
As pioneered by \citet{Kimura1964}, diffusion approximation techniques under scaling of evolutionary time and large population size are instrumental and helped advance the understanding of the distribution of inherited allele frequencies, both dynamically and under steady-state, see e.g.\ \cite{Durrett2008,Etheridge2011}. 
The original Wright-Fisher model with mutation and selection \citep{Wright1931} concerns the allele frequency distribution for two allelic types and distinguishes recurrent and non-recurrent mutation.  
With regards to approximating allele frequencies with trajectories of diffusion processes, non-recurrent mutation occur on the boundary of the state space and provide the renewal of polymorphic segments. Recurrent mutation is ongoing and appears through linear drift terms in the diffusion generator \citep{Etheridge2011}. 

In this work we apply diffusion approximation methods to study a multi-allele and multi-locus model with non-recurrent, reversible mutation and directional selection in an isolated population assuming independence among loci. 
It is a consequence of the assumption of the non-recurrent mutation mechanism that the overall mutational input is small enough to prevent additional allelic types at a locus that is already polymorphic. This is consistent with observations in empirical data where multi-allelic single nucleotide variation is typically rare \citep{Cao2015,Phillips2015}. 
In addition, mutation is reversible since we work with a fixed, finite number of allelic types and all mutation events involving a given pair of alleles may take place in both directions. Within this framework our objective is essentially to show that the presence of directional selection reduces the magnitude of genetic variation, as constrained by the bounds for neutral evolution. To this aim we derive stationary distributions over monomorphic and polymorphic states in mutation-selection-drift equilibrium. Closed form expressions for the expected allele frequency spectrum are obtained asymptotically under large population size scaling and rigorous upper bounds are derived for a wide class of associated measures of genetic variation under the influence of directional selection. 

To put our approach in context, the extension from studying the evolutionary dynamics of genetic loci with two types to general multi-allele Wright-Fisher models with a fixed number $K\ge 2$ possible allelic states for each genetic locus, can be traced back to \citet{Wright1949}. For the case of recurrent mutation mechanisms such $K$-allele models have been developed in much detail. 
The state space for single locus frequencies is now (a subset of) the $K$-simplex, which presents considerable challenges in extracting useful probabilistic information. For a brief history of K-allele Wright-Fisher models with recurrent mutation, we refer to \cite{Ferguson2018}, and for some of the mathematical results to \citet[][Ch. 4 and 5]{Etheridge2011}.  
A recent approximation approach to multi-allele models with recurrent mutation \citep{Burden2016,Ferguson2018} starts from the presumption that mutation events are rare on the time scale of evolution relevant for the diffusion approximation. Then, with sufficiently small mutation rates, the allele frequencies will be mostly concentrated either on the vertices of the $K$-simplex, or on the edges connecting a pair of mutating alleles and with only a small fraction of probability mass remaining on simplex domains that allow three or more alleles existing simultaneously. 

In contrast, our approach towards modeling the multi-allelic case relies on non-recurrent and reversible mutation in between a fixed number of genetic types.
A jump-diffusion process is introduced, biallelic by construction, with state space consisting only of the vertices and edges of the graph subset of the simplex. Some key features of the jump-diffusion process are already implemented in \cite{Mugal2014} and \cite{Kaj2016} for the simpler setting of arbitrary ancestral-and-derived alleles. In the graph model, the vertices correspond to the presence of a specific fixed type (or allele) in a genetic locus and the edges between two types represent continuous polymorphic states. Similar boundary mutation multi-allele models have been discussed in the context of synonymous codon usage \citep{Zeng2010} or so-called polymorphism-aware phylogenetic models \citep{Maio2013,Borges2019}, with a focus on methodological development for statistical inference from genomic data.

\section{A Wright-Fisher graph model} 
A continuous time Markov process with state space given by a connected, directed graph $\Gc=(\Vc,\Ec)$ with vertices $\Vc$ and unit length edges $\Ec$ captures the random change of allelic types at a single locus. Such a locus can be of abstract nature (mutant versus wild type) or specific, for instance consisting of $l$ nucleotides in the genome, where $l=1$ corresponds to a single site on the genome or $l=3$ to nucleotide triplets, such as protein-coding codons. 
The finite vertex set represents the available allelic types at the locus, while the family of edges allows for keeping track of the possible mutations among types and any polymorphic state. Each edge is a directed interval of length one, starting in a vertex $u\in\Vc$ and leading to another vertex $v\in\Vc $, such that the position on the edge records the relative frequency of type $v$ as a mutant derived from ancestral type $u$. 
The relevant graph-valued model is a hybrid jump and diffusion process with compact state space, in which the open edges form a continuous interior and the vertices are discrete boundary points. Mutation events occur only on the boundary. Each mutation is succeeded by a polymorphic phase of two alleles co-existing in the population, upon which the Wright-Fisher diffusion determines the frequency and subsequent extinction and fixation probabilities of the mutant. The graph in \cref{fig:graph} illustrates an example state space on which the graph-valued process moves.

Formally, we consider a Markov process $X=(X_t)_{t\ge 0}$, with values in the compact state space $D$ formed by the graph equipped with intervals $[0,1)$ associated with each edge in $\Ec$, directed from $0$ to $1$, and encoded by a triplet $X_t=(U_t,V_t,Y_t)\in \Vc\times \Vc\times [0,1)$. 
The boundary of the state space consists of the point set $\partial D=\{(u,u,0)\colon u\in\Vc\}$, where the boundary state $(u,u,0)$ represents a monomorphic locus at which the entire population has the same allelic type $u\in \Vc$. 
A state $(u,v,y)$ within the interior of the state space
\begin{equation*}
    D^\circ=\{(u,v,y)\colon u\in \Vc,  v\in \Vc,  u\not= v, 0<y<1\}
\end{equation*}
lies on the directed edge leading from vertex state $(u,u,0)$ to vertex state $(v,v,0)$.
It arises when a mutation from $u$ to $v$ occurred and brought mutants of type $v$ to be present in the population at relative frequency in the infinitesimal interval $(y,y+dy)$. 
Consequently, the interior state $(v,u,y)$, located on the complementary edge directed in the opposite direction from $v$ to $u$, assigns relative frequency $y$ to mutant type $u$ derived from an ancestral $v$. 
Finally, the closure $D=D^\circ\cup \partial D$ is reached along the limits
\begin{equation*}
    (u,v,y)\to
  \left\{
  \begin{array}{cc}
    (u,u,0)\in \partial D,& y\to 0\\
    (v,v,0)\in \partial D,& y\to 1 
  \end{array}
  \right. ,
  \quad (u,v,y)\in D^\circ.
\end{equation*}

\begin{figure}[!ht]
    \centering
    \includegraphics[width=\textwidth]{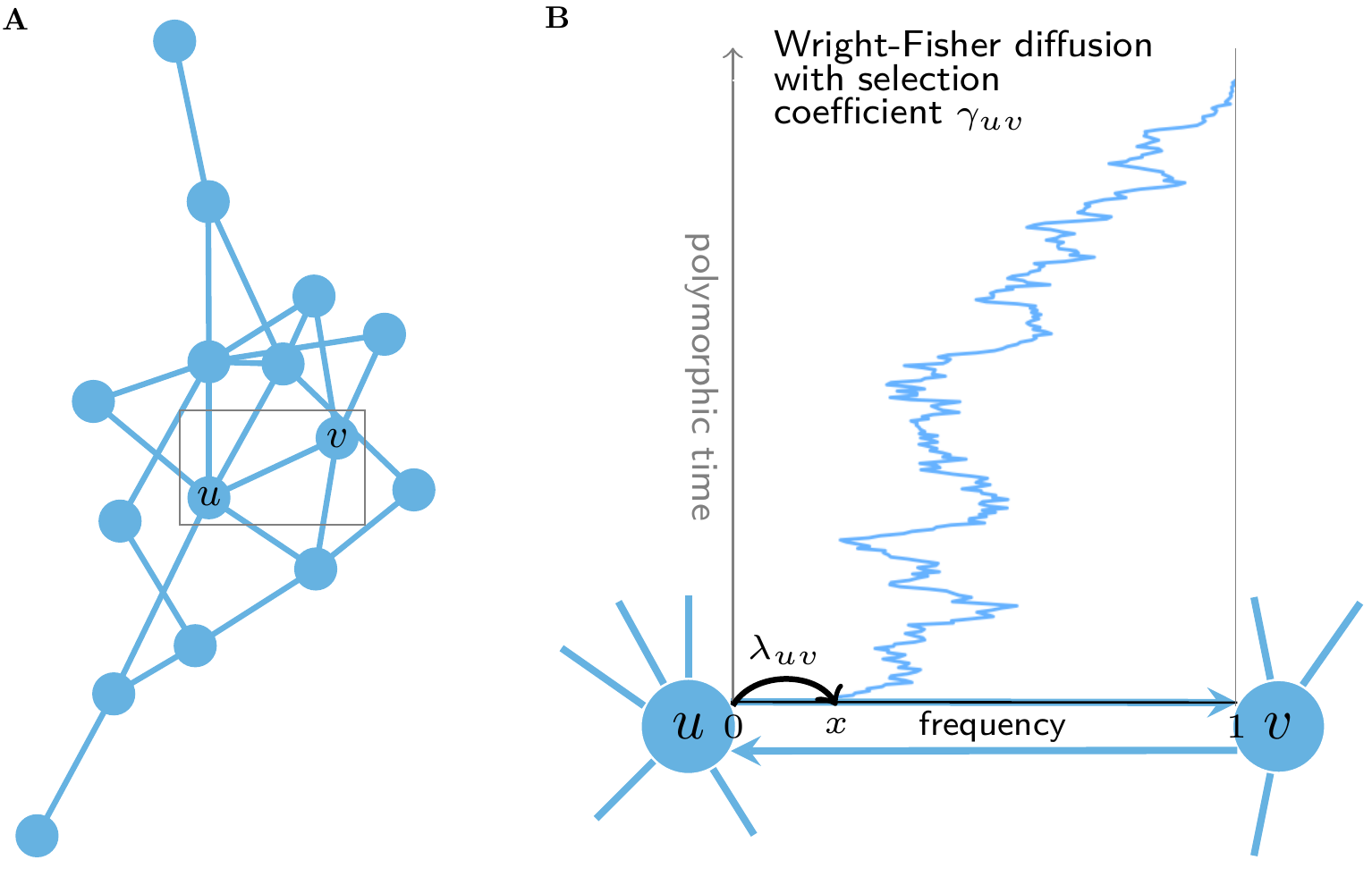}
    \caption{Panel A: A connected graph as state space for the graph-valued process. Between each pair of connected vertices there are in fact two directed edges. Panel B: Zoom-in of directed edges between types $u$ and $v$, equipped with unit intervals. A mutation on the boundary from type $u$ to type $v$ occurs with intensity $\lambda_{uv}$. The initial frequency of the mutant $v$ is a fixed value $x\in(0,1)$ which is also the initial value for a Wright-Fisher diffusion with selection coefficient $\gamma_{uv}$ that is started by the mutation. The process either goes to fixation in type $v$ or to extinction in $u$.}
    \label{fig:graph}
\end{figure}

\subsection{Reversible boundary mutation}

The mutation mechanism of the process $X$ is specified by a fixed entry point $x\in(0,1)$ and a family of nonnegative mutation rate parameters $\{\lambda_{uv}\colon v\not= u\}$. Here, $x$ represents the fraction of a population that is affected by a single mutation. 
We thus define $\lambda_{uv}/x$ as the population mutation intensity from $u$ to $v$ per evolutionary time unit, where the evolutionary time unit corresponds to "$x^{-1}$ generations". This means $\lambda_{uv}$ can be thought of as the population mutation rate "per generation", which commonly represents the macroscopic mutation rate. The motivation for this particular choice becomes clear in the subsequent \cref{sec:population_scaling} when we introduce a parameter $N$ for population size, take $x=1/N$, and run mutations at a rate of speed $\lambda_{uv}/x=N\lambda_{uv}$.

The graph edge set consists of those edges $e_{uv}$ between types for which the mutation intensity is positive, $\Ec=\{e_{uv}\colon u,v\in \Vc,\, \lambda_{uv}>0\}$. 
We postulate that every type is essential, that all mutations are reversible, and that the graph $\Gc$ is irreducible, by assuming that the mutation rates satisfy the conditions
\begin{itemize}
\item[i)]  $\lambda_u:=\sum_{v:v\not=u} \lambda_{uv}>0$ for each $u\in \Vc$,
\item[ii)]  $\lambda_{uv}>0 \iff \lambda_{vu}>0$,
\item[iii)]  for each pair of allelic types, $u,v\in \Vc$, there is a sequence of edge mutations $u\to w_1\to \dots \to w_n\to v$, such that 
 $\lambda_{u,w_1}\cdot \ldots \cdot \lambda_{w_n,v}>0$.
\end{itemize}
The first assumption guarantees that no type is a mutational trap, as the total intensity $\lambda_u$ of a mutation from $u$ to some other type is strictly positive. The second condition ensures that any mutation from one type to another may also occur in the reverse direction, i.e.\ mutation is reversible.
The third condition entails the assumption that every type may be reached from every other type by a chain of non-recurrent mutation events all occurring on the boundary.

The hybrid jump and diffusion mechanism of $X$ is such that, starting in a boundary point $z=(u,u,0)\in \partial D$, the process holds during an exponential time with intensity $\lambda_u/x$. It then jumps to an interior point $z'=(u,v,x)\in D^\circ$ governed by jump rates $\lambda_{uv}$, which represents mutant type $v$ entering the population at (continuous) fraction $x$. 
Assuming that a jump from $(u,u,0)$ to $(u,v,x)$ occurs at time $r$, the interior trajectory of $X$ is a continuous path
\begin{equation*}
    X_t=(u,v,Y_t^r),\quad r\le t<\tau,
\end{equation*}
where $Y^r_t$, $t\ge r$, is a diffusion in $D$ starting from $Y^r_r=x$ such that the path $\xi_s^r=Y^r_{r+s}$, $s\ge 0$, is a Wright-Fisher process with selection, which solves
\begin{equation}\label{eq:WFdiffusion}
    d\xi_s=\gamma_{uv}\,\xi_s(1-\xi_s)\,ds + \sqrt{\xi_s(1-\xi_s)}\,dB^{uv}_s,  \quad \xi_0=x.
\end{equation}
We denote by $\tau$ the first exit time of the interior state space $D^\circ$, i.e. $X_\tau=(u,u,0)$ if $\xi_s$ is absorbed in 0 and $X_\tau=(v,v,0)$ if absorbed in 1. The former case is extinction and the latter case is fixation of the allelic type $v$. 
For edges $e_{uv}\in \Ec$ the parameter $\gamma_{uv}$ denotes the selection coefficient for mutations from $u$ to $v$. 
Moreover, $\{B^{uv}, e_{uv}\in \Ec\}$ are i.i.d.\ standard Brownian motions. 
\Cref{fig:graph}B depicts the hybrid jump and diffusion setup.

\subsection{Directional selection}

To express genic selection in the model we let the family of selection coefficients $\gamma_{uv}$ assigned to the edges $e_{uv}$ in the graph satisfy the anti-symmetric condition
\begin{equation}\label{def:timereversible_sel}
    \gamma_{uv}=-\gamma_{vu},\quad e_{uv}\in \Ec.
\end{equation}
The central instance is directional selection based on a static fitness landscape, where each allelic type is assigned a (time-independent) fitness level $F_u$, $u\in \Vc$, and each polymorphic pair of alleles $(u,v)$ has relative selection coefficient
\begin{equation}\label{def:fitness_sel}
    \gamma_{uv}=F_v-F_u,\quad e_{uv}\in \Ec,
\end{equation}
i.e. $\gamma_{uv}=-\gamma_{vu}$.  
Consequently, in each pair the type with the highest fitness has the selective advantage at the expense and equal disadvantage of the other type. 
Another relevant example of anti-symmetric selection coefficients is the preferential fixation of strong (C and G) over weak (A and T) nucleotides due to the process of GC-biased gene conversion (gBGC) \citep{Duret2009,Mugal2015}, which analytically is equivalent to directional selection \citep{Nagylaki1983}.

We note that under assumption (\ref{def:timereversible_sel}) the distribution of the process $(X_t)$ simplifies on each pair of edges $e_{uv}$ and $e_{vu}$ through the equality in distribution
\begin{equation*}
    (u,v,Y_t^r)\stackrel{d}{=} (v,u,1-Y_t^r), \quad r\le t<\tau.
\end{equation*}

\subsection{Properties of the Wright-Fisher graph process} 

\paragraph{Green's function}

The graph process $(X_t)_{t\ge 0}$ restricted to a particular edge $e_{uv}\in \Ec$, is a classical Wright-Fisher diffusion, described in \cref{eq:WFdiffusion}, with selection coefficient $\gamma_{uv}$.
We write $\Pb^\gamma_x$ for the probability measure and $\E^\gamma_x$ for the expectation of the process starting at $x$, and select the associated scale function $S_\gamma(x)$ and speed function $m_\gamma(x)$ as
\begin{equation*}
    S_0(x)=x,\quad  S_\gamma(x)=\frac{1}{2\gamma}(1-e^{-2\gamma x}),\quad \gamma\not=0,
    \qquad m_\gamma(x)=\frac{e^{2\gamma x}}{x(1-x)}.
\end{equation*}
Since $m_\gamma$ is not integrable near $0$ or $1$, both points $\{0,1\}$ are exit boundary points, therefore accessible from the interior of the state space. The diffusion can reach either of these boundaries but will stay at the point hit first. The time $\tau_0$ required to reach $0$ is the extinction time, the time $\tau_1$ to reach $1$ the fixation time, and $\tau=\min(\tau_0, \tau_1)$ is the exit time of the interior interval $(0,1)$. The corresponding fixation probability 
$q_\gamma(x)$ equals $q_\gamma(x)=S_\gamma(x)/S_\gamma(1)$
\citep{Kimura1962}, hence
\begin{equation*}
    q_\gamma(x)=\Pb^\gamma_x(\tau_1<\tau_0)=\frac{1-e^{-2\gamma x}}{1-e^{-2\gamma}}, \quad \gamma\not=0,
    \qquad q_0(x)=x.
\end{equation*}
The occupation time functional
\begin{equation}\label{eq:green_integral}
    \E^\gamma_x\Big[\int_0^{\tau}g(\xi_s)\,ds\Big] =\int_0^1 G_\gamma(x,y)g(y)\,dy,
\end{equation}
is obtained from the Green function $G_\gamma(x,y)$ defined by
\begin{equation}\label{eq:green_def}
    G_\gamma(x,y)=\left\{
    \begin{array}{ll} 
    2q_\gamma(x)(S_\gamma(1)-S_\gamma(y))m_\gamma(y), & 0\le x\le y\le 1, \\[2mm]
    2(1-q_\gamma(x))(S_\gamma(y)-S_\gamma(0))m_\gamma(y), & 0\le y\le x\le 1.
    \end{array}
    \right. 
\end{equation}
Assuming we start from $z=(u,v,x)\in D^\circ$, the possible transitions from $z$ to $z'$ before exiting $D^\circ$ are those such that $z'=(u,v,y)$ for some $y$, $0<y<1$. For such a pair, Green's function is $G(z,z')=G_{\gamma_{uv}}(x,y)$ and governs the transition of the process from $z$ towards $z'$. 
For background on mathematical population genetics and more detailed  properties of the Wright-Fisher diffusion process with selection, we refer the reader to e.g.\ \citet{Maruyama1977,Karlin1981,Ewens2004,Etheridge2011}.

\paragraph{Invariant measure on the boundary} 

By replacing the polymorphic excursions of $X$ with instantaneous jumps, we obtain an embedded continuous time Markov chain. Indeed, starting in $u\in\Vc$ the embedded chain holds during an exponential time with rate $\lambda_u/x$, then with probability $\lambda_{uv}/\lambda_u$ picks type $v$ and with probability $q_{\gamma_{uv}}(x)$ jumps to the new type $v$. Taken together, the transition rate of the embedded chain from $u$ to $v$ is $h_{uv}/x$, where $h_{uv}=\lambda_{uv}q_{\gamma_{uv}}(x)$.  
The invariant measure on the boundary is a probability measure $\eta^x=\{\eta^x(z)=\eta_u^x,z=(u,u,0)\in \partial D\}$, which, since the state space is finite, is a steady-state for the embedded chain. 
For a pair of vertices $u$, $v$, the invariant distribution must satisfy
\[
\eta_u^x\lambda_{uv}q_{\gamma_{uv}}(x) 
=\eta_v^x \lambda_{vu} (1-q_{\gamma_{uv}}(1-x)),
\]
since fixation of a mutant $v$ starting from frequency $x$ is the same as extinction of the ancestral type $u$ starting from frequency $1-x$.
It is straightforward to verify that assumption (\ref{def:timereversible_sel}) for the selection coefficients in the model now yields the 
relationship
\begin{equation}\label{def:detailed_balance}
    \eta_u^x h_{uv} = \eta_u^x \lambda_{uv} q_{\gamma_{uv}}(x)
    =\eta_v^x \lambda_{vu} q_{\gamma_{vu}}(x)
    =\eta_v^x h_{vu}, \quad e_{uv}\in \Ec.
\end{equation}
This shows that the embedded chain satisfies a detailed balance equation across each pair of edges of the graph. The "current of probability" from $u$ to $v$, given by $\eta_u^x h_{uv}$, equals the corresponding flow $\eta_v^x h_{vu}$ from $v$ to $u$. 

Hence there exists a unique invariant and reversible measure $(\eta_u^x)_{u\in\Vc}$ on the boundary, typically associated with time-reversibility. The matrix $\Hc_x=(h_{uv}(x))/x$ with diagonal elements $h_{uu}(x)=-\sum_{v} \lambda_{uv} q_{\gamma_{uv}}(x)$ is the infinitesimal generator of the continuous time Markov chain $\widehat U_t=U_t|Y_t=0$, for which the conditioning formalizes the notion of instantaneous jumps and hence suppressed polymorphisms. 

We are now in the position to connect the probability weights on the boundary given by $\eta^x$ with the occupation measure in the interior of the state space as provided by the Green function. 
If the process starts from the boundary according to the reversible measure $\eta^x$, i.e.\ the initial distribution of $X_0$ is $\eta^x$, then the dynamics of the first jump is governed by
\begin{equation}\label{def:nu}
  \nu^x = \sum_{u,v\in\Vc} \eta_u^x \lambda_{uv}\delta_{(u,v,x)}.
\end{equation} 
In words, the intensity of the first jump is determined by the accumulation of jump intensities over all vertices and its outgoing edges, weighted by the probability to be in a particular vertex. The subsequent relative position on a particular edge is then given by the fixed entry point $x$.
Under $\nu^x$, the relevant Green function contribution to $z'=(u,v,y)$ is    
\begin{equation}\label{eq:green_nu}
    G(\nu^x,z') = \eta_u^x\lambda_{uv} G_{\gamma_{uv}}(x,y),
\end{equation}
which also justifies writing 
\begin{equation}\label{def:G_dz'}
    G(\nu^x,dz') = \eta_u^x\lambda_{uv} G_{\gamma_{uv}}(x,y)\,dy. 
\end{equation}

\paragraph{The generator}

We consider real-valued functions $f$ defined on $D$, writing $f(z)=f_{uu}(0)$ for $z=(u,u,0)\in\partial D$ and  
$f(z)=f_{uv}(y)$ for $z=(u,v,y)\in D^\circ$, and let $f_{uv}'(y)$ and $f_{uv}''(y)$ denote first and second order derivatives with respect to $y$ defined in the interior $D^\circ$ of $D$.
The infinitesimal generator of the Markov process $(X_t)_{t\ge 0}$ is the operator $\Lc$ which acts on a suitable domain $\Dc$ of functions $f\colon D\to \R$, twice continuously differentiable in the interior $D^\circ$, by 
\begin{equation*}
    \Lc f(z)=\sum_{v\in\Vc} \lambda_{uv} (f_{uv}(x)-f_{uu}(0)), \quad  z=(u,u,0)\in\partial D, 
\end{equation*}
and 
\begin{equation*}
    \Lc f(z)=
    \gamma_{uv} y(1-y)\,f_{uv}'(y) + \frac{1}{2}y(1-y) \, f_{uv}''(y), 
    \quad z=(u,v,y)\in D^\circ.
\end{equation*}
Let $\Fc$ denote the class of real-valued bounded functions on $D$ such that $f\in\Dc$, and such that $f_{uv}(y)\to f_{uu}(0)$ as $y\to 0$ and $f_{uv}(y)\to f_{vv}(0)$ as $y\to 1$, $u,v\in\Vc$.
 
\subsection{Stationary distribution}

In order to determine the equilibrium behavior of the Wright-Fisher graph process we will construct a stationary distribution, i.e.\ a distribution which is preserved under the time-dynamics of the model and hence represents the typical probability weight assigned to the various parts of the graph in steady-state. It can be shown in addition that the graph process satisfies exponential ergodicity and that the stationary distribution is the unique limit distribution. In this extended sense, the stationary distribution measures how likely it is that the process visits a certain position on the graph after allowing sufficient time to reach an equilibrium. However, the proof of exponential ergodicity is extensive and outside the scope of the work at hand. 

For each edge $e_{uv}\in\Ec$ let $D_{e_{uv}}=\{(u,v,y)\colon u,v\in\Vc, 0\leq y<1\}$ be the subset of $D$ which consists of the monomorphic state $(u,u,0)$ and all polymorphic states $(u,v,y), 0\leq y<1,$ between $u$ and $v$. Then $\cup_{e_{uv}\in \Ec} D_{e_{uv}}=D$ and the intersection of two edge sets contains any shared vertex.
A measure $\mu$ on $D$ is stationary for $(X_t)_{t\ge 0}$, by definition, if the balance equations
\begin{equation}\label{eq:balance_eq_generator}
    \int_D  \Lc f(z)\,\mu(dz)  
    = \sum_{e_{uv}\in\Ec}\int_{D_{e_{uv}}}  \Lc f(z)\,\mu(dz)= 0,\quad f\in \Fc,
\end{equation}
hold. 
We say that the measure $\mu$ is edge-reversible for the Wright-Fisher graph process $(X_t)_{t\ge 0}$, if the detailed balance edge equations
\[
\int_{D_{e_{uv}}} \Lc f(z)\,\mu(dz) + \int_{D_{e_{vu}}} \Lc f(z)\,\mu(dz) = 0,\quad f\in \Fc,
\]
hold for every pair of edges $e_{uv},e_{vu}\in \Ec$. 
By summing this relation over all pairs $u$ and $v$, linked by the two edges $e_{uv}$ and $e_{vu}$, we recover  
\cref{eq:balance_eq_generator}. Thus, an edge-reversible measure $\mu$ yields a stationary distribution of the Wright-Fisher graph process.

\begin{theorem}\label{thm:stationary}
There exists an edge-reversible measure $\mu$ for $(X_t)_{t\ge 0}$ on $D$, which is given by
\begin{align*}
  \mu(z)  &= \frac{\eta^x(z)}{1+ \int_{D^\circ} G(\nu^x,dz')}
  ,\quad z\in\partial D,\\
  \mu(dz) &=\frac{G(\nu^x,dz)}{1+\int_{D^\circ} G(\nu^x,dz')},\quad z\in D^\circ,
\end{align*}
where $\eta^x(z)$, $z\in\partial D$, is the unique boundary measure defined by the detailed balance equations in \cref{def:detailed_balance}, $\nu^x$ is be the averaged jump measure in \cref{def:nu}, both dependent on $x$, and $G(\nu^x,dz')$ is the measure on $D^\circ$ introduced in \cref{def:G_dz'}. 
\end{theorem}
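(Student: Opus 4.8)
The plan is to verify that the stated $\mu$ is edge-reversible; stationarity then follows from the observation recorded just before the theorem. Since the denominator $1+\int_{D^\circ}G(\nu^x,dz')$ is a common positive constant, it suffices to check the detailed balance edge equations for the unnormalized measure $\tilde\mu$ given by $\tilde\mu(z)=\eta_u^x$ for $z=(u,u,0)\in\partial D$ and $\tilde\mu(dz)=\eta_u^x\lambda_{uv}G_{\gamma_{uv}}(x,y)\,dy$ on the interior of the edge $e_{uv}$ (so that $\tilde\mu$ coincides with $G(\nu^x,\cdot)$ on $D^\circ$, cf.\ \eqref{eq:green_nu}). First I would fix the bookkeeping convention implicit in \eqref{eq:balance_eq_generator}: since the vertex $(u,u,0)$ is shared by all $D_{e_{uw}}$, the contribution of this vertex to $\int_{D_{e_{uv}}}\Lc f\,d\tilde\mu$ is taken to be the single summand $\eta_u^x\lambda_{uv}(f_{uv}(x)-f_{uu}(0))$ of $\Lc f(u,u,0)=\sum_{w}\lambda_{uw}(f_{uw}(x)-f_{uu}(0))$, so that summing over the edges incident to $u$ exactly reconstitutes $\eta_u^x\,\Lc f(u,u,0)$ without overcounting.

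The core computation is to evaluate $\int_{D_{e_{uv}}}\Lc f\,d\tilde\mu$ for $f\in\Fc$. Its interior part is $\eta_u^x\lambda_{uv}\int_0^1 G_{\gamma_{uv}}(x,y)\,\Lc f(u,v,y)\,dy$, where $\Lc f(u,v,y)=\gamma_{uv}y(1-y)f_{uv}'(y)+\tfrac12 y(1-y)f_{uv}''(y)$ is precisely the action of the Wright-Fisher generator of \eqref{eq:WFdiffusion} on $y\mapsto f_{uv}(y)$. Applying the occupation-time identity \eqref{eq:green_integral} together with Dynkin's formula for the diffusion $\xi$ started at $\xi_0=x$ and run to the exit time $\tau$ of $(0,1)$, one obtains $\int_0^1 G_{\gamma_{uv}}(x,y)\,\Lc f(u,v,y)\,dy=\E^{\gamma_{uv}}_x[f_{uv}(\xi_\tau)]-f_{uv}(x)$. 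Since $f\in\Fc$ extends continuously with $f_{uv}(0)=f_{uu}(0)$, $f_{uv}(1)=f_{vv}(0)$, and $\Pb^{\gamma_{uv}}_x(\xi_\tau=1)=q_{\gamma_{uv}}(x)$, the interior part equals $\eta_u^x\lambda_{uv}\big[(1-q_{\gamma_{uv}}(x))f_{uu}(0)+q_{\gamma_{uv}}(x)f_{vv}(0)-f_{uv}(x)\big]$. Adding the vertex summand $\eta_u^x\lambda_{uv}(f_{uv}(x)-f_{uu}(0))$, the $\pm f_{uv}(x)$ terms and the bulk of the $f_{uu}(0)$ terms cancel, and with $h_{uv}=\lambda_{uv}q_{\gamma_{uv}}(x)$ we are left with
\[
\int_{D_{e_{uv}}}\Lc f\,d\tilde\mu=\eta_u^x\lambda_{uv}q_{\gamma_{uv}}(x)\big(f_{vv}(0)-f_{uu}(0)\big)=\eta_u^x h_{uv}\big(f_{vv}(0)-f_{uu}(0)\big).
\]

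Running the same computation on the reversed edge gives $\int_{D_{e_{vu}}}\Lc f\,d\tilde\mu=\eta_v^x h_{vu}\big(f_{uu}(0)-f_{vv}(0)\big)$, so by the boundary detailed balance $\eta_u^x h_{uv}=\eta_v^x h_{vu}$ of \eqref{def:detailed_balance} the two contributions sum to zero; this is the edge-reversibility relation for the pair $e_{uv},e_{vu}$, valid for all $f\in\Fc$. Summing over all such edge pairs yields \eqref{eq:balance_eq_generator}, hence $\mu$ is stationary. Finally $\mu$ is a genuine probability measure because $\int_{D^\circ}G(\nu^x,dz')=\sum_{e_{uv}\in\Ec}\eta_u^x\lambda_{uv}\int_0^1 G_{\gamma_{uv}}(x,y)\,dy<\infty$, each inner integral being $\E^{\gamma_{uv}}_x[\tau]$, which is finite (equivalently, the explicit $S_\gamma,m_\gamma$ show $G_\gamma(x,\cdot)$ is bounded on $[0,1]$). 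I expect the main obstacle to be the rigorous justification of the Dynkin/Green identity on the class $\Fc$: one must confirm that $\Lc f(u,v,\cdot)$ is integrable against $G_{\gamma_{uv}}(x,\cdot)$ and that optional stopping applies at the exit boundary $\{0,1\}$. Because $0$ and $1$ are absorbing exit boundaries and the functions in $\Fc$ are bounded with the prescribed one-sided limits, this reduces to a standard localization argument — stopping $\xi$ on reaching $\varepsilon$ or $1-\varepsilon$, applying Dynkin there, and letting $\varepsilon\downarrow0$ — the remaining work being the bookkeeping that keeps each shared vertex counted exactly once.
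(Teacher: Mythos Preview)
Your proposal is correct and arrives at exactly the same key identity as the paper, namely
\[
\int_0^1 \Lc f_{uv}(y)\,G_{\gamma_{uv}}(x,dy)=-(f_{uv}(x)-f_{uu}(0))+q_{\gamma_{uv}}(x)\big(f_{vv}(0)-f_{uu}(0)\big),
\]
after which both proofs finish identically via the boundary detailed balance \eqref{def:detailed_balance}. The route to this identity, however, is genuinely different. The paper works by hand: it inserts the explicit piecewise form \eqref{eq:green_def} of $G_\gamma(x,\cdot)$, splits the integral at $y=x$ into pieces $A_{uv}(x)$ and $B_{uv}(x)$, integrates by parts twice in each, and checks that the leftover $f_{uv}'(x)$ terms cancel algebraically. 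You instead invoke the probabilistic meaning of the Green function: by \eqref{eq:green_integral} the integral is $\E^{\gamma_{uv}}_x\!\big[\int_0^\tau \Lc f_{uv}(\xi_s)\,ds\big]$, and Dynkin's formula collapses this to $\E^{\gamma_{uv}}_x[f_{uv}(\xi_\tau)]-f_{uv}(x)$, which you then read off from the fixation probability. Your argument is shorter and explains \emph{why} the identity holds (the Green operator inverts the killed generator), whereas the paper's explicit calculation is entirely self-contained and sidesteps the need to justify optional stopping at the exit boundary---precisely the point you flag as the main obstacle. Both the localization you sketch and the paper's direct integration by parts are legitimate ways to close that gap.
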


\begin{proof}   
Put $\Omega=1+\int_{D^0} G(\nu^x,dz')$. 
To verify that $\mu$ is edge-reversible we need to establish for each pair of edges $e_{uv},e_{uv}\in\Ec$ the identity
\begin{equation}\label{eq:generator_stat}
\begin{split}
    &\Omega\Big( \int_{D_{e_{uv}}} \Lc f(z)\,\mu(dz) + \int_{D_{e_{vu}}} \Lc f(z)\,\mu(dz) \Big)\\
    &=\eta_u^x\lambda_{uv}(f_{uv}(x)-f_{uu}(0)) 
     +\eta_u^x\lambda_{uv}\int_0^1 \Lc f_{uv}(y)\,G_{\gamma_{uv}}(x,dy)\\
    &\quad +\eta_v^x\lambda_{vu}(f_{vu}(x)-f_{vv}(0))
     +\eta_v^x\lambda_{vu}\int_0^1 \Lc f_{vu}(y)\,G_{\gamma_{vu}}(x,dy) =0,
\end{split}
\end{equation}
$f\in\Fc$. 
Here, using \cref{eq:green_def},
\begin{equation*}
    \int_0^1 \Lc f_{uv}(y)\,G_{\gamma_{uv}}(x,dy) = q_{\gamma_{uv}}(x) A_{uv}(x)+ (1-q_{\gamma_{uv}}(x)) B_{uv}(x)
\end{equation*}
with
\begin{equation*}
    A_{uv}(x)
    = \int_x^1 
    \Big(f_{uv}'(y) + \frac{1}{2\gamma_{uv}}f_{uv}''(y)\Big)
   (1-e^{-2\gamma_{uv}(1-y)}) \,dy
\end{equation*}
and
\begin{equation*}
    B_{uv}(x)
    =\int_0^x 
    \Big(f_{uv}'(y) + \frac{1}{2\gamma_{uv}}f_{uv}''(y)\Big)
    (e^{2\gamma_{uv}y}-1)\,dy.
\end{equation*}
Partial integration twice in each of $A_{uv}(x)$ and $B_{uv}(x)$ yield, noticing that $f_{uv}(y)\to f_{vv}(0)$ as $y\to 1$, 
\begin{equation*}
    A_{uv}(x)=f_{vv}(0)-f_{uv}(x) - \frac{1}{2\gamma_{uv}} f_{uv}'(x) (1-e^{-2\gamma_{uv}(1-x)})
\end{equation*}
and
\begin{equation*}
    B_{uv}(x)=
    f_{uu}(0)-f_{uv}(x) + \frac{1}{2\gamma_{uv}} f_{uv}'(x) (e^{2\gamma_{uv}x}-1). 
\end{equation*}
Since
\[
q_{\gamma_{uv}}(x)\frac{1-e^{-2\gamma_{uv}(1-x)}}{2\gamma_{uv}}
    - (1-q_{\gamma_{uv}}(x)) \frac{e^{2\gamma_{uv}x}-1}{2\gamma_{uv}}=0,
\]
it follows that    
\begin{align*}
&q_{\gamma_{uv}}(x) A_{uv}(x)
     +(1-q_{\gamma_{uv}}(x))B_{uv}(x)\\
&\quad=q_{\gamma_{uv}}(x)(f_{vv}(0)-f_{uv}(x))
+(1-q_{\gamma_{uv}}(x)) (f_{uu}(0)-f_{uv}(x)),
\end{align*}
which is
\begin{align*}
\int_0^1 \Lc f_{uv}(y)\,G_{\gamma_{uv}}(x,dy)
=-(f_{uv}(x)-f_{uu}(0))
+q_{\gamma_{uv}}(x)(f_{vv}(0)-f_{uu}(0)).
\end{align*}
Similarly, by symmetry,
\begin{align*}
\int_0^1 \Lc f_{vu}(y)\,G_{\gamma_{vu}}(x,dy)
=-(f_{vu}(x)-f_{vv}(0))
+q_{\gamma_{vu}}(x)(f_{uu}(0)-f_{vv}(0)).
\end{align*}
Thus, by combining \cref{eq:generator_stat} with the detailed balance equation (\ref{def:detailed_balance}) for the boundary measure $\eta^x$, 
\begin{align*}
    &\Omega\Big(\int_{D_{e_{uv}}} \Lc f(z)\,\mu(dz) + \int_{D_{e_{vu}}} \Lc f(z)\,\mu(dz)\Big)\\
    &=
    \eta_u^x \lambda_{uv} q_{\gamma_{uv}}(x)(f_{vv}(0)-f_{uu}(0))
    + \eta_v^x \lambda_{vu} q_{\gamma_{vu}}(x)(f_{uu}(0)-f_{vv}(0))=0,
\end{align*}
and therefore $\int_D \Lc f(z)\,\mu(dz) = 0$ in view of \cref{eq:balance_eq_generator}.
\end{proof} 

\begin{remark}
\citet{Peng2013} study diffusion processes defined on an open, bounded domain $D^0$ in $\R^d$ with holding and jumping from a regular boundary $\partial D$, and provide existence and uniqueness of a stationary distribution under suitable assumptions on the regularity of the coefficients of the generator of the diffusion process. Our result for the case of the graph-valued process is parallel to \citet[][Theorem 4.2]{Peng2013}.
\end{remark}

\section{Large population size scaling}\label{sec:population_scaling}

The polymorphic segments of the path of $(X_t)_{t\geq0}$ through the interior $D^\circ$ run on the time scale of evolution, which is a characteristic of the Wright-Fisher diffusion. The generic re-scaling approach behind the Wright-Fisher diffusion approximation considers the change in frequency in a population of size $N$ over the time span of $N$ generations. Simultaneously, the relevant selection coefficient at the level of generations, $s$, is of the order $s\sim \gamma/N\to 0$, where $\gamma$ is the selection coefficient of the limiting diffusion process. 
To properly adapt the holding time distribution in the present model to the evolutionary time scale we therefore introduce a parameter $N$ as a proxy of population size and prescribe that the jumps into the interior of the state space have size $x=1/N$. The time scale of the system is then set by the speed of mutation $\lambda_{uv}/x= \lambda_{uv} N$.
Our goal in this section is to analyze the stationary distribution $\mu$ in \cref{thm:stationary} with $x=1/N$ for large but fixed $N$. Specifically we identify the dominant terms in the asymptotic expansion of $\mu$ under scaling for large $N$ and drop remainder terms of order $\ln N/N$ and smaller. During this procedure it is convenient to make a number of simplifying approximations valid formally in the limit $N\to\infty$. 
It is important to keep in mind however that the population size proxy $N$ is kept as a finite model parameter. 

\subsection{Approximation of the stationary distribution}

We recall that in our model two vertices $u$ and $v$ are always connected by two directed edges $e_{uv}$ and $e_{vu}$ whenever the jump rates between $u$ and $v$ are positive. 
For each edge $e_{uv}$, as $N\to \infty$, we introduce the scaled fixation probability $\omega_\gamma$, where $\gamma=\gamma_{uv}$, by
\begin{equation}\label{def:omega}
  q_{\gamma}(1/N) =
  \frac{\omega_{\gamma}}{N}+O\Big(\frac{1}{N^2}\Big),\quad
  \omega_\gamma=\frac{2\gamma}{1-e^{-2\gamma}},\quad \gamma\not=0,\qquad
  \omega_0=1.
\end{equation}
Due to assumption (\ref{def:timereversible_sel}) on directional selection, we obtain the symmetry relation
\begin{equation}\label{eq:symmetry_omega}
    \omega_{\gamma_{vu}}=\omega_{-\gamma_{uv}}= e^{-2\gamma_{uv}}\, \omega_{\gamma_{uv}}.
\end{equation}
As before, the collection of jump rates $\{\lambda_{uv}\}$ and selection coefficients $\{\gamma_{uv}\}$ associated with the vertices $\Vc$ and edges $\Ec$ again define an embedded scaled continuous time Markov chain on $\Vc$.  
The scaled generator matrix arises as $N \Hc_{1/N}\sim \Hc$, with
\begin{equation*}
    \Hc=(h_{uv}),\quad h_{uv}=\lambda_{uv} \omega_{\gamma_{uv}},\quad -h_{uu}=\sum_{v\in\Vc} \lambda_{uv} \omega_{\gamma_{uv}},
\end{equation*}
and is irreducible. In analogy with the previous relation
(\ref{def:detailed_balance}), the unique solution $\eta=\{\eta(z),z\in \partial D\}$ of the detailed balance equations
\begin{equation}\label{def:detailed_balanceN}
    \eta_v \lambda_{vu}\omega_{\gamma_{vu}}=\eta_u \lambda_{uv}\omega_{\gamma_{uv}},\quad e_{uv}\in \Ec,
\end{equation}
is the scaled invariant boundary distribution of the embedded Markov chain with generator $\Hc$. 
The solution $\eta$ of (\ref{def:detailed_balanceN}), that no longer depends on $N$, is a convenient approximation of the solution $\eta^x$ of (\ref{def:detailed_balance}) with $x=1/N$.  The distribution of the first jump averaged over the scaled invariant measure,
\begin{equation}\label{def:nu_scaled}
\nu^{1/N}=\sum_{u,v\in \Vc} \eta_u\lambda_{uv}\delta_{(u,v,1/N)},
\end{equation}
still depends on the initial mutation frequency $1/N$. 
The next result records the dominant terms in \cref{thm:stationary}, where we have fixed all mutation and selection parameters, and then choose $x=1/N$ and $N$ large enough so that remainder terms of order $O(\ln N/N)$ and smaller are removed.

\begin{proposition}\label{prop:statapprox}
The stationary single site distribution $\mu$ in \cref{thm:stationary} satisfies for large $N$ the approximation
\begin{equation*}
\mu(z)=\mu_N(z)+O(1/N),\quad z\in D,
\end{equation*}
where the approximating distribution $\mu_N$ has monomorphic site probabilities 
\begin{equation*}
    \mu_N(u,u,0)=\frac{\eta_u}{\Omega'_N},\quad u\in\Vc,
\end{equation*}
for $z=(u,u,0)$, 
polymorphic density given by
\begin{equation*}
\begin{split}
    \mu_N(u,v,y)\,dy 
    =\frac{2\eta_u \lambda_{uv}}{\Omega'_N}
    \Big\{ &\frac{\omega_{\gamma_{uv}}(1-e^{-2\gamma_{uv}(1-y)})}
      {2\gamma_{uv} y(1-y)}\,1_{\{1/N<y<1\}} \\
    &\quad
      + (N-\omega_{\gamma_{uv}})\,1_{\{0<y<1/N\}} \Big\}\,dy,
\end{split}
\end{equation*}
for $z=(u,v,y)$, and is normalized by $\Omega'_N=\Omega_N+O(\ln N/N)$, with
\begin{equation}\label{eq:OmegaN}
    \Omega_N = 1 + 2 \sum_{u,v\in\Vc} \eta_u \lambda_{uv} (1+\ln N+K_{\gamma_{uv}}) 
\end{equation}
and
\begin{equation*}
    K_\gamma=\omega_\gamma \int_0^1 (-\ln y)  (e^{-2\gamma y}-e^{-2\gamma(1-y)})\,dy.
\end{equation*}
\end{proposition}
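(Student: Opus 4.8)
The plan is to take the exact stationary measure of \cref{thm:stationary} with $x=1/N$ and expand each of its three ingredients --- the monomorphic weights, proportional to $\eta_u^x$; the polymorphic density on an edge $e_{uv}$, proportional to $\eta_u^x\lambda_{uv}G_{\gamma_{uv}}(1/N,y)$; and the normalizer $\Omega=1+N\sum_{u,v}\eta_u^x\lambda_{uv}\,\E_{1/N}^{\gamma_{uv}}[\tau]$, in which the factor $N=1/x$ reflects that the polymorphic excursions run at mutation speed $\lambda_{uv}/x=\lambda_{uv}N$ and $\E_{1/N}^{\gamma}[\tau]=\int_0^1 G_\gamma(1/N,y)\,dy$ --- keeping all contributions down to order $1/N$ and discarding $O(\ln N/N)$. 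First I would handle the boundary weights: $(\eta_u^x)$ solves the detailed-balance system \cref{def:detailed_balance} with coefficients $q_{\gamma_{uv}}(1/N)$, and since $q_\gamma(1/N)=\omega_\gamma/N+O(1/N^2)$ by \cref{def:omega}, clearing the common factor $1/N$ turns this into the fixed, irreducible, reversible system \cref{def:detailed_balanceN} perturbed at order $1/N$; by smooth dependence of the normalized null vector of an irreducible subgenerator on its entries (Cramer's rule on the reduced system, or continuity of the Perron vector) this yields $\eta_u^x=\eta_u+O(1/N)$.

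Next I would expand $G_\gamma(1/N,y)$ on a single edge, with $\gamma=\gamma_{uv}$. Inserting the explicit $S_\gamma$, $m_\gamma$, $q_\gamma$ into \cref{eq:green_def},
\begin{equation*}
  G_\gamma(1/N,y)=
  \begin{cases}
    2q_\gamma(1/N)\,\dfrac{1-e^{-2\gamma(1-y)}}{2\gamma y(1-y)}, & 1/N<y<1,\\[2mm]
    2\bigl(1-q_\gamma(1/N)\bigr)\dfrac{e^{2\gamma y}-1}{2\gamma y(1-y)}, & 0<y<1/N,
  \end{cases}
\end{equation*}
and substituting $q_\gamma(1/N)=\omega_\gamma/N+O(1/N^2)$, together with $\tfrac{e^{2\gamma y}-1}{2\gamma y}=1+O(1/N)$ and $\tfrac1{1-y}=1+O(1/N)$ uniformly for $0<y<1/N$, shows that $N G_\gamma(1/N,y)$ equals twice the bracketed profile of the statement on each of $(1/N,1)$ and $(0,1/N)$, up to relative error $O(1/N)$ (the overall factor $2$ being that of the polymorphic density in the proposition). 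The total interior mass $\E_{1/N}^\gamma[\tau]$ is then obtained by splitting the domain: the short interval contributes $\tfrac2N+O(1/N^2)$, and on $(1/N,1)$, writing $\tfrac1{y(1-y)}=\tfrac1y+\tfrac1{1-y}$, the $\tfrac1{1-y}$ half integrates to a finite constant while in the $\tfrac1y$ half the logarithmic divergence at the lower endpoint is extracted by subtracting the limiting numerator value $\tfrac{1-e^{-2\gamma}}{2\gamma}=1/\omega_\gamma$, producing $\tfrac{\ln N}{\omega_\gamma}$ plus a convergent integral; multiplying by $2q_\gamma(1/N)=\tfrac{2\omega_\gamma}{N}+O(1/N^2)$ gives $\E_{1/N}^\gamma[\tau]=\tfrac2N(1+\ln N+K_\gamma)+O(1/N^2)$ provided the residual constant is identified with $K_\gamma$. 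That identification is an integration by parts: with $H(y)=\int_0^y(e^{-2\gamma s}-e^{-2\gamma(1-s)})\,ds$ one has $H(0)=H(1)=0$, hence $\int_0^1 H(y)/y\,dy=\int_0^1(-\ln y)(e^{-2\gamma y}-e^{-2\gamma(1-y)})\,dy$, and the constant assembled above is exactly $\omega_\gamma\int_0^1 H(y)/y\,dy$.

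Finally I would recombine: summing over edges and inserting $\eta_u^x=\eta_u+O(1/N)$ and the expansion of $\E_{1/N}^{\gamma_{uv}}[\tau]$ gives $\Omega=\Omega_N+O(\ln N/N)$ with $\Omega_N$ as in \cref{eq:OmegaN}, the $O(\ln N/N)$ originating from the $O(1/N)$ error in the boundary weights times the $O(\ln N)$ interior mass; taking $\Omega'_N$ to be the exact normalizer of $\mu_N$, which is likewise $\Omega_N+O(\ln N/N)$, and using $\Omega_N\asymp\ln N$, the ratio $\Omega/\Omega'_N$ equals $1+O(1/N)$, so dividing $\eta_u^x$ and the two-region polymorphic profile by $\Omega'_N$ and collecting the errors gives $\mu(z)=\mu_N(z)+O(1/N)$ on $D$. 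The step I expect to be the main obstacle is the evaluation of $\E_{1/N}^\gamma[\tau]$: cleanly isolating the $\ln N$ singularity of the Green integral at $y=1/N$ and recognizing the leftover constant as $K_\gamma$ through the integration-by-parts identity, while keeping all remainders uniform --- since the polymorphic density is of size $N$ on $(0,1/N)$ and grows like $1/y$ near the vertices, the estimate $\mu=\mu_N+O(1/N)$ must be read as absolute for the monomorphic weights and relative for the polymorphic density near the boundary. By contrast the boundary-weight perturbation $\eta^x\to\eta$ is a routine finite-dimensional computation.
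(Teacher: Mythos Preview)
Your proposal is correct and follows essentially the same route as the paper: expand $NG_\gamma(1/N,y)$ from \cref{eq:green_def} in the two regions $(0,1/N)$ and $(1/N,1)$ using $q_\gamma(1/N)=\omega_\gamma/N+O(1/N^2)$, then evaluate $N\int_0^1 G_\gamma(1/N,y)\,dy$ by the partial fraction $\tfrac{1}{y(1-y)}=\tfrac1y+\tfrac1{1-y}$ and an integration by parts that converts the residual constant into the $(-\ln y)$ form defining $K_\gamma$. Your treatment is in fact slightly more explicit than the paper's on two points the paper leaves implicit: the perturbation $\eta_u^{1/N}=\eta_u+O(1/N)$ of the boundary weights, and the bookkeeping that the $O(1/N)$ error in $\eta^{1/N}$ against the $O(\ln N)$ interior mass is what produces the $O(\ln N/N)$ remainder in $\Omega'_N$.
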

Before proving \cref{prop:statapprox}, we comment on some properties of the function $K_\gamma$ and state the approximate distribution $\mu_N$ for the case of neutral evolution.
\begin{remark}\label{rem:K}
First, we have $K_\gamma\ge 0$, $\gamma\ge 0$. Second, the function $K_\gamma$ is odd, $K_{-\gamma}=-K_\gamma$. In particular, $K_{\gamma_{vu}}=K_{-\gamma_{uv}}= -K_{\gamma_{uv}}$. Third, $K_\gamma$ grows logarithmically for large $\gamma$: with $\gamma_e=0.5772\dots$
denoting Euler's constant,
\begin{equation*}
    K_{\gamma}\le  \left\{
    \begin{array}{ll}
        \gamma, & 0\le \gamma\le 1\\
        \gamma_e+\ln 2\gamma,  &\gamma\ge 1, 
    \end{array} \right.
    \qquad  K_{\gamma}\sim  \gamma_e+\ln 2\gamma\quad   \mbox{for large $\gamma$}.
\end{equation*}
\end{remark}

\begin{remark}\label{rm:neutral_evolution}
For the special case of neutral evolution, $\gamma_{uv}=0$ for all $e_{uv}\in\Ec$, we have $\omega_0=1$ and $K_0=0$, so
\begin{equation*}
    \mu_N(u,u,0)=\frac{\eta_u}{\Omega_N'},\quad u\in\Vc,
\end{equation*}
and
\begin{equation*}
    \mu_N(u,v,y)\,dy =\frac{2\eta_u \lambda_{uv}}{\Omega_N'}
    \Big\{ y^{-1}\,1_{\{1/N<y<1\}} + (N-1)\,1_{\{0<y<1/N\}} \Big\}\,dy,
\end{equation*}
for $(u,v,y)\in D^\circ$, where $(\eta_u)$ is the unique solution of the balance equations
\begin{equation*}
    \eta_v \lambda_{vu}=\eta_u \lambda_{uv},\quad e_{uv}\in \Ec,
\end{equation*}
and $\Omega_N'=\Omega_N+O(1/N)$ the normalization factor under neutrality with
\begin{equation*}
    \Omega_N = 1 + 2 (1+\ln N) \sum_{u\in\Vc} \eta_u \lambda_u.
\end{equation*}
\end{remark}

\begin{proof}
For fixed $z'=(u,v,y)$, 
\begin{equation*}
 G(\nu^{1/N},z') =  \eta_u \lambda_{uv} NG_{\gamma_{uv}}(1/N,y). 
\end{equation*}
Here, using \cref{def:omega} with large $N$ and $\gamma=\gamma_{uv}$, 
\begin{align*}
    NG_{\gamma}\Big(\frac{1}{N},y\Big)  
    &= Nq_{\gamma}\Big(\frac{1}{N}\Big) \frac{1-e^{-2\gamma(1-y)}}{\gamma y(1-y)}\,1_{\{1/N<y<1\}} \\
    &\qquad+N\Big(1-q_{\gamma}\Big(\frac{1}{N}\Big)\Big) \frac{e^{2\gamma y}-1}{\gamma  y(1-y)}\,1_{\{0<y<1/N\}}\\
    &= \omega_{\gamma} \frac{1-e^{-2\gamma(1-y)}}{\gamma y(1-y)}\,1_{\{1/N<y<1\}} \\
    &\qquad+ 2(N-\omega_\gamma)\,1_{\{0<y<1/N\}} +O\Big(\frac{1}{N}\Big),
\end{align*}
from which we obtain $\Omega'_N \mu_N(z)$, $z\in D^\circ$.
Moreover, 
\begin{equation*}
\begin{split}
    &N \int_0^1 G_\gamma\Big(\frac{1}{N},y\Big)\,dy
    =\omega_\gamma \int_{1/N}^1\frac{1-e^{-2\gamma(1-y)}}{\gamma y(1-y)}\,dy + 2 + O\Big(\frac{1}{N}\Big).  
\end{split}
\end{equation*}
By partial integration the remaining integral evaluates to 
\begin{align*}
\int_{1/N}^1\{y+&(1-y)\}\frac{1-e^{-2\gamma(1-y)}}{\gamma y(1-y)}\,dy\\
  &= \int_0^{1-1/N} \frac{1-e^{-2\gamma y}}{\gamma y}\,dy 
  + \int_{1/N}^1 \frac{1-e^{-2\gamma(1-y)}}{\gamma y}\,dy\\
  &= \frac{2\ln N}{\omega_\gamma} + 2\int_0^1 (-\ln y) (e^{-2\gamma y}-e^{-2\gamma(1-y)})\,dy + O\Big(\frac{\ln N}{N}\Big).
\end{align*}
Hence, 
\begin{equation*}
    N \int_0^1 G_\gamma\Big(\frac{1}{N},y\Big)\,dy = 2 (1+\ln N+K_\gamma) +O\Big(\frac{\ln N}{N}\Big).
\end{equation*}
Integration over $D^\circ$ yields
\begin{equation*}
    \int_{D^0} G(\nu^{1/N},z')\,dz' =
    2 \sum_{u,v\in\Vc} \eta_u \lambda_{uv} (1+\ln N+K_{\gamma_{uv}}) 
    +O\Big(\frac{\ln N}{N}\Big).
\end{equation*}
The representation of an approximate distribution $\mu_N(z)$ as stated now follows from \cref{thm:stationary}.

Finally, to verify the claims in \cref{rem:K}, for $\gamma>0$, 
\begin{equation*}
\begin{split}
    \frac{K_\gamma}{\omega_\gamma} 
    &= \int_0^{1/2} (-\ln y)  (e^{-2\gamma y}-e^{-2\gamma(1-y)})\,dy\\
    & \quad -\int_0^{1/2} (-\ln (1-y))  (e^{-2\gamma y}-e^{-2\gamma(1-y)})\,dy
   \ge 0.
\end{split}
\end{equation*}
The relation $\omega_{-\gamma}=e^{-\gamma} \omega_\gamma$ implies the symmetry $K_{-\gamma}=-K_\gamma$. 
Furthermore,  the change-of-variable $x=2\gamma y$ yields 
\begin{equation*}
    K_\gamma
    = \frac{\omega_\gamma}{2\gamma} \int_0^{2\gamma} (-\ln x + \ln(2\gamma)) e^{-x}\,dx
    -\frac{\omega_\gamma}{2\gamma} \int_0^{2\gamma} -\ln(1- x/2\gamma)\, e^{-x}\,dx.
\end{equation*}
The rightmost integral is positive and tends to zero as $\gamma\to\infty$,
by an application of the monotone convergence theorem.
Also, $\omega_\gamma/(2\gamma)\to 1$ as $\gamma\to\infty$. Thus,
\begin{equation*}  
   K_\gamma - \ln(2\gamma)\sim \frac{\omega_\gamma}{2\gamma} \int_0^{2\gamma} (-\ln x) e^{-x}\,dx \to  \gamma_e,\quad \gamma\to\infty.
\end{equation*}
\Cref{rm:neutral_evolution} follows directly from \cref{prop:statapprox} for $\gamma=0$.
\end{proof}

\subsection{Allele frequency spectra}\label{sec:stat_distr_undirected_edges}

The unfolded allele frequency spectrum (AFS) describes the allele frequency distribution of the derived allele at a biallelic locus. The AFS is a summary statistic of the stationary distribution and can be retrieved using the modeling setup with two directed edges between each pair of types and the large $N$ approximation in \cref{prop:statapprox}. For a given locus, the unfolded AFS representing the density of the derived allele frequency of any type corresponds to $\sum_{u,v\in\Vc}\mu_N(u,v,y), 0<y<1$.
We visualize the polymorphic density on two directed edges $e_{uv}$ and $e_{vu}$ of a multi-allele model for $\gamma_{uv}=1$ in \cref{fig:mu_N}A. 
\begin{figure}[!ht]
    \centering
    \includegraphics[width=\textwidth]{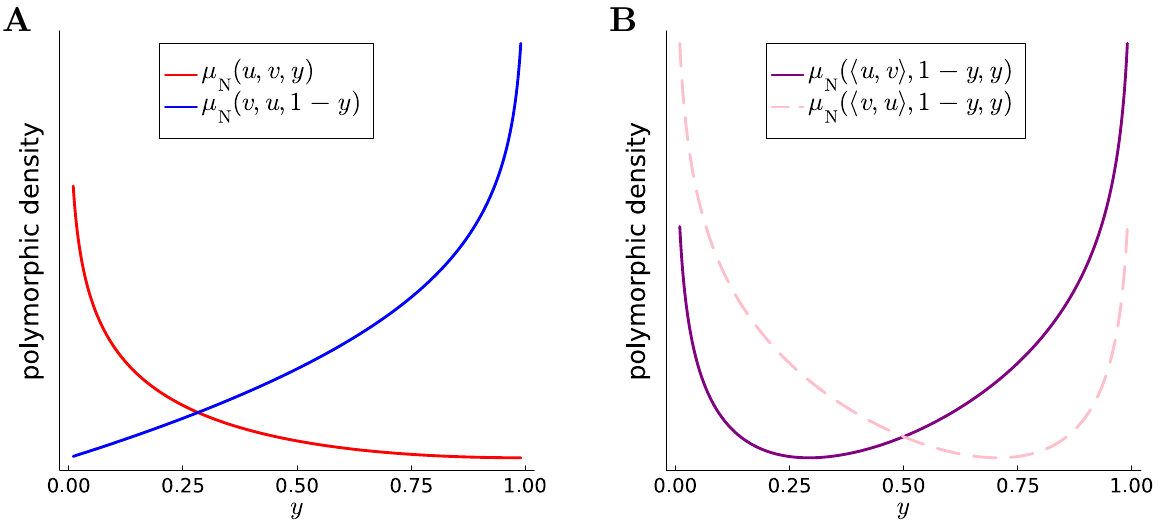}
    \caption{Polymorphic densities of two types $u$ and $v$ with selection coefficient $\gamma_{uv}=1$. 
    Panel A: Relative log-scaled densities for type $u$ at frequency $1-y$ on the directed edge $e_{uv}$, $\mu_N(u,v,y)$ in red, and on the directed edge $e_{vu}$, $\mu_N(v,u,1-y)$ in blue.
    Panel B: The relative log-scaled density on the edges of the pair $\langle u,v \rangle$ for type $u$ at frequency $1-y$, $\mu_N(\langle u,v \rangle,1-y,y)$ in purple, which is the sum of the two curves in panel A. The relative log-scaled density for type $v$ at frequency $1-y$, $\mu_N(\langle v,u \rangle,1-y,y)$ in pink, is symmetric to the purple curve around $y=0.5$.}
    \label{fig:mu_N}
\end{figure}

In practice, the unfolded AFS relies on a polarization of polymorphisms into derived and ancestral types, knowledge that requires additional information such as outgroup data, which is not always readily available. If this is the case a folded AFS can be derived from data. The folded AFS takes biallelic observations and typically measures the minor allele at some frequency $y \in [0,0.5]$, and the other allelic type at complementary frequency $1-y \in [0.5,1]$. 
To formalize representations of unfolded and folded allele frequency spectra using the stationary distribution in the current model, we introduce the set of unordered pairs of vertices $\Pc\coloneqq\{\langle u,v \rangle\colon u,v\in\Vc\}$, with $|\Pc| =\binom{|\Vc|}{2}$. Restricting to the edges of the pair $\langle u,v \rangle$, the polymorphic density of the process when type $v$ has frequency $y$ and type $u$ frequency $1-y$ equals
\[
\mu_N(\langle u,v \rangle, 1-y, y) \coloneqq \mu_N(u,v,y)+\mu_N(v,u,1-y).
\]
The folded density of the minor allele on $\langle u,v \rangle$ is therefore
\[
\mu_\mathrm{fold}(\langle u,v \rangle, y)\coloneqq
\mu_N(\langle u,v \rangle, 1-y, y) + 
\mu_N(\langle v,u \rangle, 1-y, y),\quad 0<y\le 1/2.  
\]
\Cref{fig:mu_N}B depicts the polymorphic density $\mu_N(\langle u,v \rangle, 1-y, y)$ on edges connecting a pair $\langle u,v \rangle$.

\begin{corollary}\label{cor:edgesum}
We have
\begin{equation*}
\begin{split}
    \mu_N(\langle u,v \rangle, 1-y, y)\,dy &=
    \frac{2\eta_u\lambda_{uv}}{\Omega_N'} 
    \Big\{\frac{e^{2\gamma_{uv}y}}{y(1-y)}\,1_{\{1/N<y<1-1/N\}} \\
    &\qquad + N\,1_{\{0<y<1/N\}} + Ne^{2\gamma_{uv}} \,1_{\{1-1/N<y<1\}} \Big\}\,dy,
\end{split}
\end{equation*}
and 
\begin{equation*}
\begin{split}
    \mu_\mathrm{fold}(\langle u,v \rangle, y)\,dy =
    \frac{2\eta_u\lambda_{uv}}{\Omega_N'} 
    \Big\{&\frac{e^{2\gamma_{uv}y}+e^{2\gamma_{uv}(1-y)}}{y(1-y)}\,1_{\{1/N<y\le 1/2\}} \\
    &+ N(1+e^{2\gamma_{uv}}) \,1_{\{0<y<1/N\}} \Big\}\,dy,
\end{split}
\end{equation*}
with the normalization factor $\Omega_N'$ in \cref{prop:statapprox}. 
\end{corollary}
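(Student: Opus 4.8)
The plan is to derive both identities by direct substitution into the polymorphic density of \cref{prop:statapprox}, using the anti-symmetry $\gamma_{vu}=-\gamma_{uv}$ and the symmetries it induces. From \cref{eq:symmetry_omega} one has $\omega_{\gamma_{vu}}=e^{-2\gamma_{uv}}\omega_{\gamma_{uv}}$, and combining the detailed balance relation \cref{def:detailed_balanceN} with this gives $\eta_v\lambda_{vu}=e^{2\gamma_{uv}}\eta_u\lambda_{uv}$, hence $\eta_v\lambda_{vu}\,\omega_{\gamma_{vu}}=\eta_u\lambda_{uv}\,\omega_{\gamma_{uv}}$. I would first write out $\mu_N(v,u,1-y)$ from the formula in \cref{prop:statapprox} with the roles of $u$ and $v$ exchanged and argument $1-y$ in place of $y$: the indicator intervals $\{1/N<\cdot<1\}$ and $\{0<\cdot<1/N\}$ turn into $\{0<y<1-1/N\}$ and $\{1-1/N<y<1\}$, and, via $\gamma_{vu}=-\gamma_{uv}$ together with $\eta_v\lambda_{vu}\omega_{\gamma_{vu}}=\eta_u\lambda_{uv}\omega_{\gamma_{uv}}$, the selection-dependent factor reduces on the interior part to $\frac{2\eta_u\lambda_{uv}}{\Omega_N'}\,\omega_{\gamma_{uv}}\frac{e^{2\gamma_{uv}y}-1}{2\gamma_{uv}y(1-y)}$.

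Adding $\mu_N(u,v,y)+\mu_N(v,u,1-y)$, on the common interior range $1/N<y<1-1/N$ both edges contribute polymorphic densities, and their sum collapses to $\frac{2\eta_u\lambda_{uv}}{\Omega_N'}\cdot\frac{e^{2\gamma_{uv}y}}{y(1-y)}$ via the identity $(1-e^{-2\gamma(1-y)})+(e^{2\gamma y}-1)=e^{2\gamma y}(1-e^{-2\gamma})$ together with $\omega_\gamma(1-e^{-2\gamma})=2\gamma$ (cf.\ \cref{def:omega}). On the left strip $0<y<1/N$ only $\mu_N(u,v,y)$ takes its boundary value $N-\omega_{\gamma_{uv}}$, while $\mu_N(v,u,1-y)$ is the polymorphic density of the reverse edge near its far endpoint, which on this strip equals $\omega_{\gamma_{uv}}$ up to the $O(1/N)$ error already inherent in $\mu_N$ (using $\frac{e^{2\gamma y}-1}{2\gamma y}\to1$, $\frac{1}{1-y}\to1$, and $e^{2\gamma_{uv}}\omega_{\gamma_{vu}}=\omega_{\gamma_{uv}}$); the two contributions sum to $N$. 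The right strip $1-1/N<y<1$ is symmetric: $\mu_N(v,u,1-y)$ contributes $e^{2\gamma_{uv}}(N-\omega_{\gamma_{vu}})$ and $\mu_N(u,v,y)$ contributes $\omega_{\gamma_{uv}}$, which combine to $Ne^{2\gamma_{uv}}$ again through $e^{2\gamma_{uv}}\omega_{\gamma_{vu}}=\omega_{\gamma_{uv}}$. This establishes the first display.

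For the folded density I would apply the edge-pair formula just obtained, but with $u$ and $v$ interchanged, to get $\mu_N(\langle v,u\rangle,1-y,y)$; writing $e^{2\gamma_{vu}y}=e^{-2\gamma_{uv}y}=e^{-2\gamma_{uv}}e^{2\gamma_{uv}(1-y)}$ and absorbing the prefactor $\eta_v\lambda_{vu}=e^{2\gamma_{uv}}\eta_u\lambda_{uv}$ turns it into $\frac{2\eta_u\lambda_{uv}}{\Omega_N'}\{\frac{e^{2\gamma_{uv}(1-y)}}{y(1-y)}1_{\{1/N<y<1-1/N\}}+Ne^{2\gamma_{uv}}1_{\{0<y<1/N\}}+N\,1_{\{1-1/N<y<1\}}\}$. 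Restricting to $0<y\le1/2$ removes the rightmost indicator, and adding this to the first display yields $\mu_\mathrm{fold}(\langle u,v\rangle,y)$ with numerator $e^{2\gamma_{uv}y}+e^{2\gamma_{uv}(1-y)}$ on $\{1/N<y\le1/2\}$ and $N(1+e^{2\gamma_{uv}})$ on $\{0<y<1/N\}$.

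The only delicate point is the bookkeeping on the two strips of width $1/N$: one must check that replacing the near-constant polymorphic density of the ``other'' edge by its endpoint value perturbs the measure only by $O(1/N)$, which is consistent with the approximation conventions of \cref{prop:statapprox}. Everything else is the elementary algebra of the identities $(1-e^{-2\gamma(1-y)})+(e^{2\gamma y}-1)=e^{2\gamma y}(1-e^{-2\gamma})$ and $e^{2\gamma_{uv}}\omega_{\gamma_{vu}}=\omega_{\gamma_{uv}}$, together with the reversible-measure relation $\eta_v\lambda_{vu}=e^{2\gamma_{uv}}\eta_u\lambda_{uv}$.
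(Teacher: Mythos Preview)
Your proposal is correct and follows essentially the same route as the paper: sum the two directed-edge densities from \cref{prop:statapprox}, use detailed balance \cref{def:detailed_balanceN} together with $\gamma_{vu}=-\gamma_{uv}$ and \cref{eq:symmetry_omega} to collapse the interior to $e^{2\gamma_{uv}y}/(y(1-y))$, and absorb the boundary-strip corrections into the $O(1/N)$ error of $\mu_N$. Your derivation of the folded density by reapplying the first display with $u,v$ swapped is slightly slicker than the paper's ``similar calculations,'' but not a different argument.
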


\begin{proof} 
Consider a fixed pair of types $\langle u,v \rangle \in\Pc$. The density of the process when $v$ has frequency $y$ follows from adding up the two densities on each directed edge derived in \cref{prop:statapprox},
\begin{equation*}
\begin{split}
   &\Omega_N'(\mu_N(u,v,y)+\mu_N(v,u,1-y))\\
   &= 2\eta_u\lambda_{uv}\left\{ \frac{\omega_{\gamma_{uv}}(1-e^{-2\gamma_{uv}(1-y)})}{2\gamma_{uv}y(1-y)} 1_{\{1/N<y<1\}}
   + (N-\omega_{\gamma_{uv}}) 1_{\{0<y<1/N\}}  \right\} \, dy\\
   &\, + 2\eta_v\lambda_{vu} \left\{ \frac{\omega_{\gamma_{vu}}(1-e^{-2\gamma_{vu}y})}{2\gamma_{vu}y(1-y)} 1_{\{0<y<1-1/N\}}
   + (N-\omega_{\gamma_{vu}}) 1_{\{1-1/N<y<1\}} \right\} \, dy.
\end{split}
\end{equation*}
Using detailed balance, \cref{def:detailed_balanceN}, and the relationship $\gamma_{vu}=-\gamma_{uv}$, the contribution from the interior, $1/N<y<1-1/N$, for large $N$ is
\begin{equation*}
\begin{split}
    &\eta_u \lambda_{uv} \omega_{\gamma_{uv}} \left\{ \frac{1-e^{-2\gamma_{uv}(1-y)}}{\gamma_{uv}y(1-y)} + \frac{1-e^{-2\gamma_{vu}y}}{\gamma_{vu}y(1-y)} \right\} 
    = 2 \eta_u \lambda_{uv} \frac{e^{2\gamma_{uv}y}}{y(1-y)}. 
\end{split}
\end{equation*}
For large $N$, the contribution from close to the boundary at zero, $0<y<1/N$, is
\begin{equation*}
    2\eta_u \lambda_{uv} (N-\omega_{\gamma_{uv}}) 
    + \eta_v\lambda_{vu} \frac{\omega_{\gamma_{vu}}(1-e^{-2\gamma_{vu}y})}{\gamma_{vu}y(1-y)}
    = 2\eta_u \lambda_{uv} N + O\Big(\frac{1}{N}\Big),
\end{equation*}
and from close to the boundary at one, $1-1/N<y<1$, 
\begin{equation*}
\begin{split}
    &\eta_u \lambda_{uv} \frac{\omega_{\gamma_{uv}}(1-e^{-2\gamma_{uv}(1-y)})}{\gamma_{uv}y(1-y)} 
    + 2\eta_v\lambda_{vu} (N-\omega_{\gamma_{vu}})\\
    &= 2\eta_u\lambda_{uv} N e^{2\gamma_{uv}} + O\Big(\frac{1}{N}\Big),
\end{split}
\end{equation*}
where we again used detailed balance and relation \cref{eq:symmetry_omega}.
As the alternative view of merging the directed edges simply entails reshuffling contributions, the normalization constant does not change.
The expression for $\mu_\mathrm{fold}(\langle u,v \rangle, y)\,dy$ follows from similar calculations.
\end{proof}

An example of the unfolded AFS in a four type model with state space shown in \cref{fig:AFSs}A is given as the orange curve in \cref{fig:AFSs}B.
Similarly as for the unfolded AFS, the sum $\sum_{\langle u,v \rangle\in\Pc}\mu_\textrm{fold}(\langle u,v \rangle,y), 0<y\le 1/2,$ yields the folded, type-independent distribution of derived allele frequencies at a locus (\cref{fig:AFSs}B, red curve). 

\begin{figure}[!ht]
    \centering
    \includegraphics[width=\textwidth]{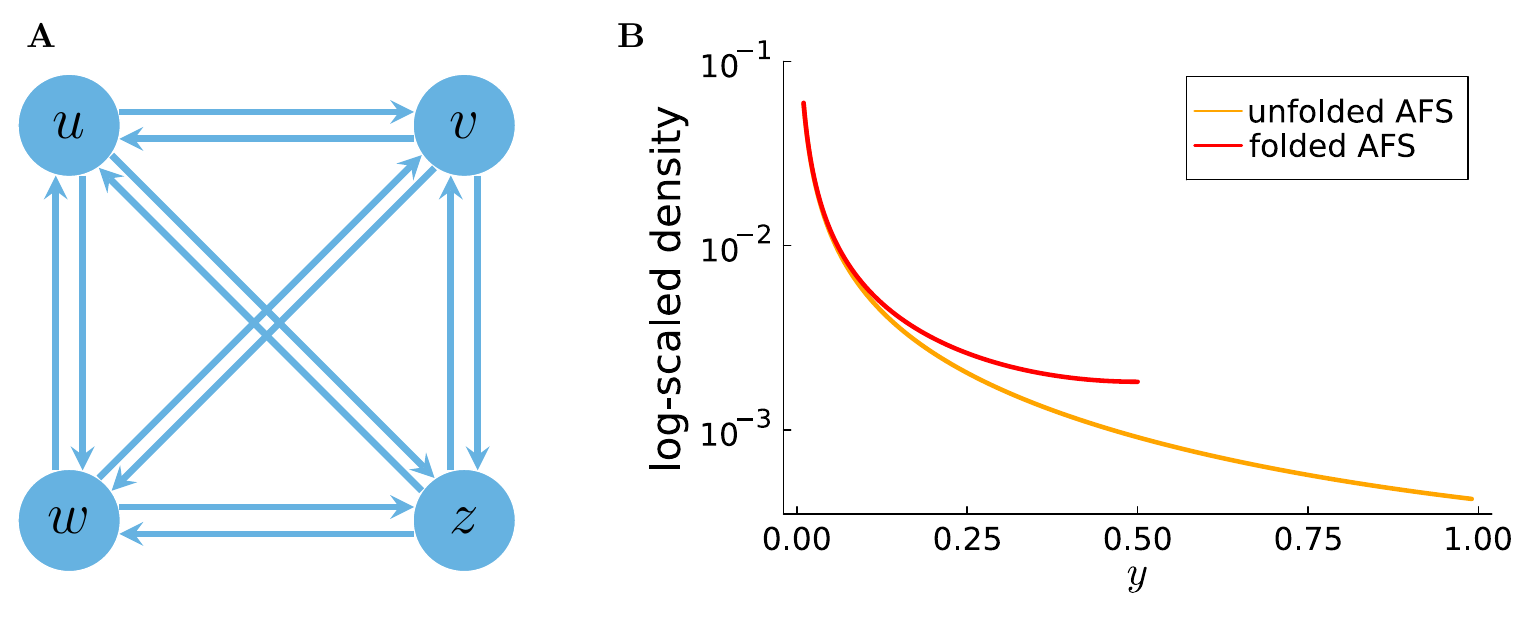}
    \caption{Allele frequency spectra for a four type model.
    Panel A: State space for a model with four types. 
    Panel B: Log-scaled unfolded (blue) and folded (red) AFS.
    Parameters: population size $N=10^4$, selection coefficients defined as fitness differences with $F_u=F_z=0$ and $F_v=F_w=1$, and equal mutation intensity $\lambda=N \times 10^{-8}$ among all types.}
    \label{fig:AFSs}
\end{figure}

\subsection{Extension to multiple loci}\label{sec:multiple_loci}

The model introduced here applies directly to a collection of $L$ loci which evolve independently. For the special case that a locus represents a single site, a collection of consecutive sites represents a DNA sequence. Thus, even though the following considerations are in general about a collection of independent loci, we may use the term \enquote{sequence} instead.
The state of the sequence is defined by a collection of independent holding and jumping diffusion processes $X^j$, $j=1,\dots,L$, with values in the direct product set $\prod_{j=1}^L \Gc^j$, where all graphs $\Gc^j$ have the same vertex set $\Vc$ and edge set $\Ec$.
We allow the set of selection coefficients $\gamma^j_{uv}$, $1\le j\le L$, to differ from one locus to another, but assume that the transition rates $\lambda_{uv}$ are the same among loci or along the sequence. 
For this it is convenient to introduce scaled intensities $\theta_{uv}=\lambda_{uv} L$, $u,v\in\Vc$. The results in \cref{prop:statapprox,cor:edgesum} then apply with $\lambda_{uv}=\theta_{uv}/L$. Summing over $v$, the total intensity $\theta_u=\sum_{v\in\Vc}\theta_{uv}$ of a mutation from type $u$ becomes $\theta_u=\lambda_u L$, $u\in\Vc$, and so $N\theta_u$ is the total rate in the collection of loci per time unit of a mutation affecting $u$. In contrast, the steady-state probabilities $(\mu^j(z))_{z\in D}$ and the boundary probabilities $(\eta^j_u)_{u\in\Vc}$ typically vary between loci, $1\le j\le L$. The total mutation rate on the boundary, $\widehat\theta$, averaged across loci, is
\begin{equation}\label{eq:thetahat}
    \widehat\theta=\frac{1}{L}\sum_{j=1}^L \widehat\theta^j,
\quad   \widehat\theta^j  =\sum_{u\in\Vc} \eta_u^j\theta_u.
\end{equation}
The quantities $\widehat\theta^j$ are the jump rates of the distribution $\nu^{1/N}$ in \cref{def:nu_scaled}.
Clearly, 
\begin{equation}\label{eq:thetamax}
\theta_\mathrm{min}:=\min_{u\in\Vc} \theta_u
\le \widehat\theta 
\le \max_{u\in\Vc}\theta_u =:\theta_\mathrm{max}. 
\end{equation}
The closely related summation
\begin{equation}\label{eq:thetahateff}
    \widehat\theta_\eff=
\sum_{u\in\Vc} \widehat \mu_u\theta_u,
\quad \widehat \mu_u=\frac{1}{L}\sum_{j=1}^L
\frac{\eta^j_u}{\Omega_N^j}
\end{equation}
represents the \emph{effective mutation rate} of the sequence, weighted by the average probability that loci are monomorphic. Of course, $\widehat\theta_\eff\le \widehat\theta$. We say that the mutation mechanism on the graph is \emph{homogeneous} if the total rates in each vertex coincide, i.e.\ $\theta_u=\theta$, $u\in\Vc$. 
Under the stronger assumption of homogeneous mutation, 
\begin{equation}\label{eq:thetahomogene}
    \widehat \theta =\theta,
    \qquad \widehat\theta_\eff=\frac{1}{L}\sum_{j=1}^L \frac{1}{\Omega_N^j}\, \theta.
\end{equation}

We summarize the steady state of the collection of loci by considering the measure-valued process $\Xc_\infty=\sum_{j=1}^L \delta_{X_\infty^j}$, where $X^j_\infty$ has the stationary single locus distribution $\mu^j$ on $\Gc^j$.
The corresponding unfolded AFS across multiple loci is given by
\begin{equation*}
    \frac{1}{L}\sum_{j=1}^L\sum_{u,v\in\Vc}\mu_N^j(u,v,y),\quad 0<y<1.
\end{equation*}
For suitable functions $f$, $f(z)=f_{uv}(y)$, $z\in D$, 
\begin{equation*}
    \langle\Xc_\infty,f\rangle=\sum_{j=1}^L f(X_\infty^j),\quad f\in \Fc,
\end{equation*}
represents the sequence equilibrium distribution. The steady-state expectation under the approximate large population size distribution $\mu_N$ in \cref{prop:statapprox} is \begin{equation}\label{eq:mean_steadystate}
\begin{split}
    \E_N\langle\Xc_\infty,f\rangle
    =\sum_{j=1}^L \Big\{
    &\sum_{u\in\Vc} f_{uu}(0)\mu^j_N(u,u,0) \\
    &\qquad +\sum_{u,v\in \Vc}\int_0^1 f_{uv}(y)\,\mu^j_N(u,v,y)\, dy\Big\}.
\end{split}
\end{equation}

\section{Impact of directional selection on genetic variation} 

There exists a number of summary statistics to assess genetic variation in a population or population sample. These arise as the result of evaluating functionals $\E_N\langle\Xc_\infty,f\rangle$ for specifically chosen functions $f$ and can be analyzed by using \cref{eq:mean_steadystate}.
The first term in the sum over $L$ in (\ref{eq:mean_steadystate}) provides the weight of the boundary probabilities $(\mu^j(u,u,0))_{1\le j\le L}$ over monomorphic loci, and the second term adds the relevant contributions from the allele frequency spectrum of the polymorphic loci. 
We begin with a list of the basic instances of such statistics. As a reference for each case  we specialize to neutral evolution and derive the relevant neutral summary statistics. Under the assumption  $\gamma_{uv}=0$ for every $u,v\in\Vc$, \cref{eq:mean_steadystate} simplifies into
\begin{equation}\label{eq:mean_steadystate_neutral}
\begin{split}
    &\E_N\langle\Xc_\infty,f\rangle
    =\frac{L}{\Omega_N} \sum_{u\in\Vc} f_{uu}(0)\eta_u \\ 
    &\quad +\frac{2}{\Omega_N}\sum_{u,v\in \Vc}\eta_u\theta_{uv}
    \Big(f_{uv}(0+)+ \int_{1/N}^1 \frac{f_{uv}(y)}{y}\,dy
    \Big) +\Rc_N
\end{split}
\end{equation}
with
\[
\Omega_N=1+\frac{2(1+\ln N)\widehat\theta^{\,0}}{L},\quad 
\widehat\theta^{\,0}=\sum_{u\in\Vc} \eta_u\theta_u\le \theta_\mathrm{max},
\]
and $\Rc_N=O(\ln N/N)$. Here, $(\eta_u)_{u\in \Vc}$ is the solution of the neutral detailed balance equation, that is, $\eta_u\theta_{uv}=\eta_v\theta_{vu}$, for every $u,v\in \Vc$, c.f.\ \cref{rm:neutral_evolution}. 
Under homogeneous mutation, 
$\widehat\theta^{\,0}=\theta$.

\subsection{Summary statistics under neutral evolution}\label{lst:summary_statistics_neutral} 

The following listing is derived from (\ref{eq:mean_steadystate_neutral}) with the remainder term $\Rc_N$ suppressed. The additional approximation assuming $\ln N/L$ is not too large, indicated by writing $\sim$ instead of $=$, falls within the approximation range of $\Rc_N$. 
\begin{itemize}
\item[i)] {\bfseries Average effective mutation rate}

Define $f$ by $f_{uu}(0)=\theta_u$ and $f_{uv}(y)=0$. The expected value $\widehat\theta_\eff^{\,0}=\E_N\langle\Xc_\infty,f\rangle/L$ is the average effective mutation rate per sequence under neutral evolution, specifically taking into effect that mutations only occur on the boundary of the graph. The first term in (\ref{eq:mean_steadystate_neutral}) yields
\[ 
\widehat\theta_\eff^{\,0}
= \frac{{\widehat\theta}^{\,0}}{1+2 L^{-1}(1+\ln N)\widehat\theta^{\,0}}
\sim {\widehat\theta}^{\,0}\Big(1- \frac{2(1+\ln N)}{L}\widehat\theta^{\,0}\Big).
\]

\item[ii)] {\bfseries Polymorphic allele functionals}

Using the effective mutation rate in i), we observe for functions $f$ that act on edges only and are independent of the type, i.e.\ with $f_{uu}(0)=0$ and $f_{uv}(y)=f(y)$, $u,v\in\Vc$, that (\ref{eq:mean_steadystate_neutral}) has the form
\begin{equation*}
\begin{split}
    \E_N\langle\Xc_\infty,f\rangle
    &= 2 \widehat\theta_\eff^{\,0} 
    \Big(f(0+)+ \int_{1/N}^1 \frac{f(y)}{y}\,dy
    \Big)\\
    & \leq 2 \widehat\theta^{\,0} 
    \Big(f(0+)+ \int_{1/N}^1 \frac{f(y)}{y}\,dy
    \Big).
\end{split}
\end{equation*}

\item[iii)] {\bfseries Number of monomorphic sites}

Let $f=f^\partial$ be the indicator function on the boundary $\partial D$. The expected number of monomorphic sites out of $L$ is 
\[
\E_N\langle\Xc_\infty,f^\partial\rangle=\frac{L}{\Omega_N} 
=\frac{L}{1+2 L^{-1}(1+\ln N)\widehat\theta^{\,0}}
\sim L-2(1+\ln N)\,\widehat\theta^{\,0}.
\]

\item[iv)] {\bfseries Number of polymorphic sites}

Let $f^\circ=1-f^\partial$.
For $L$ sufficiently large compared to $\ln N$ we obtain the familiar approximation of the expected number of polymorphic sites as 
\[
\E_N\langle\Xc_\infty,f^\circ\rangle=
L-\frac{L}{\Omega_N}
= 2(1+\ln N)   \widehat\theta_\eff^{\,0}
\le 2(1+\ln N)\widehat\theta^{\,0}. 
\]

\item[v)]  {\bfseries Number of segregating sites in a sample}

To obtain the number of segregating sites in a sample of size $m$, we take $f_{uu}(0)=0$ and $f_{uv}(y)=g_m(y)$, where
\begin{equation*}
    g_m(y)
    = \sum_{k=1}^{m-1} \binom{m}{k} y^k(1-y)^{m-k}
    =1-y^m-(1-y)^m,\quad 0\le y\le 1,
\end{equation*}
is the probability that a sample of size $m\ge 2$ is polymorphic when drawn from a population with derived frequency $y$. Then, it holds $\int_0^{1/N}g_m(y)\,dy\sim O(1/N^2)$ and
\[
\int_{1/N}^1 y^{-1}g_m(y)\,dy\sim 
\int_0^1 y^{-1}g_m(y)\,dy
=\sum_{k=1}^{m-1} \frac{1}{k}.
\]
Hence
\begin{align*}\label{eq:upper_bound_segr_sites_sample}
    S_{N,L}^m &= \E_N \langle \Xc_\infty,f\rangle
     \sim  2\widehat\theta_\eff^{\,0} 
     \,\sum_{k=1}^{m-1} \frac{1}{k}
     \le 2{\widehat\theta}^{\,0}
      \,\sum_{k=1}^{m-1} \frac{1}{k}.
\end{align*}

\item[vi)] {\bfseries Pair-wise nucleotide differences}

The standard measure of genetic diversity in the population, typically denoted $\pi$, is the average number of pair-wise nucleotide differences normalized per site \citep{Nei1979}. 
For sample size $m$ we take $f_{uu}(0)=0$ and $f_{uv}(y)=h_m(y)$, where 
\begin{equation*}
    h_m(y)
    = \binom{m}{2}^{-1}\,\sum_{k=1}^{m-1} k(m-k)\binom{m}{k} y^k(1-y)^{m-k}
    = 2y(1-y).
\end{equation*}
Hence 
\[
\pi= \E_N \langle \Xc_\infty,f\rangle /L
= 2\,\frac{\widehat\theta_\eff^{\,0}}{L}
\le 2\,\frac{\widehat\theta^{\,0}}{L},
\]
which turns out to be the average mutation load per site and is the same as the expected proportion of segregating sites in a sample of size two, $S_{N,L}^2/L$.

\end{itemize}

\subsection{Allele frequency statistics and their upper bounds}

We are interested in the overall effect of directional selection acting on the functionals covered above and closely related quantities, as compared to their counterparts under neutral evolution.  Our main result shows that any presence of directional selection among the alleles essentially benefits monomorphic loci and constrains the number of polymorphic loci.

\begin{theorem}\label{thm:upper_bound}
We consider the Wright-Fisher graph model extended to $L$ loci with fixed, arbitrary parameters $(\theta_{uv})$ for mutation and $\{(\gamma_{uv}^j)\}_{1\le j\le L}$ for selection. 
\begin{itemize}
    \item[1)] With $\widehat\theta$, $\widehat\theta_\eff$, $\theta_\mathrm{min}$ and $\theta_\mathrm{max}$ defined in \cref{eq:thetahat,eq:thetahateff,eq:thetamax}, we have
    \[
        \frac{\theta_\mathrm{min}}
        {1+2L^{-1}(1+\ln N) \theta_\mathrm{min}}   
        \le \widehat\theta_\eff
        \le \widehat \theta \le \theta_\mathrm{max}.
    \]
    \item[2)] Let $f$ be a function on $D$ such that $f_{uv}(y)=f(y)\geq0$ is a function only of the frequency $y$ with $f(y)\to f(0+)\geq0$, $y\to0$, and $f(0)=0$. Then 
    \begin{align}\label{eq:frequency_only}  \nonumber
        0\le \E_N  \langle \Xc_\infty,f\rangle &\le 
        2 \widehat\theta_\eff \,\Big\{f(0+) + \int_{1/N}^1\frac{ f(y)}{y}\,dy\Big\}\\
        &\le 
        2 \widehat\theta \,\Big\{f(0+) + \int_{1/N}^1\frac{ f(y)}{y}\,dy\Big\}.
    \end{align}
    If, moreover, $\int_0^1 y^{-1} f(y)\,dy<\infty$, then
    \begin{equation}\label{eq:f_integrable}
        \E_N \langle \Xc_\infty,f\rangle \le 
        2\widehat\theta_\eff \int_0^1\frac{f(y)}{y}\,dy
       \le 
        2\widehat\theta \int_0^1\frac{f(y)}{y}\,dy. 
    \end{equation}
\end{itemize}
\end{theorem}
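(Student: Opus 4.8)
The plan is to expand $\E_N\langle\Xc_\infty,f\rangle$ via the single-locus formula \eqref{eq:mean_steadystate} with the approximate densities $\mu_N^j$ from \cref{prop:statapprox}, and to reduce everything to two elementary facts: that the correction term in the normalizing constant satisfies $\sum_{u,v}\eta_u^j\theta_{uv}K_{\gamma_{uv}^j}\le 0$, and that $z\mapsto\sinh(az)/z$ is nondecreasing on $(0,1]$ for $a\ge 0$.

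For part 1), interchanging the sums in $\widehat\theta_\eff=\sum_u\widehat\mu_u\theta_u$ and using $\sum_u\eta_u^j\theta_u=\widehat\theta^j$ gives $\widehat\theta_\eff=\frac1L\sum_{j=1}^L\widehat\theta^j/\Omega_N^j$. Since $\Omega_N^j\ge 1$ this yields $\widehat\theta_\eff\le\widehat\theta$, and $\widehat\theta\le\theta_\mathrm{max}$ follows from \eqref{eq:thetamax} because each $\eta^j$ is a probability vector. For the lower bound I would first show $\Omega_N^j\le 1+2L^{-1}(1+\ln N)\widehat\theta^j$: writing \eqref{eq:OmegaN} with $\lambda_{uv}=\theta_{uv}/L$ and splitting off the $K$-free part as $(1+\ln N)\widehat\theta^j$, it remains to check $\sum_{u,v}\eta_u^j\theta_{uv}K_{\gamma_{uv}^j}\le 0$. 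Pairing $(u,v)$ with $(v,u)$ and using $K_{\gamma_{vu}}=-K_{\gamma_{uv}}$ from \cref{rem:K}, this sum equals $\sum_{\langle u,v\rangle}K_{\gamma_{uv}^j}\bigl(\eta_u^j\theta_{uv}-\eta_v^j\theta_{vu}\bigr)$; detailed balance \eqref{def:detailed_balanceN} together with \eqref{eq:symmetry_omega} gives $\eta_v^j\theta_{vu}=e^{2\gamma_{uv}^j}\eta_u^j\theta_{uv}$, so each term is $\eta_u^j\theta_{uv}(1-e^{2\gamma_{uv}^j})K_{\gamma_{uv}^j}\le 0$, since $1-e^{2\gamma}$ and $K_\gamma$ never have the same sign. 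With this bound, $\widehat\theta^j/\Omega_N^j\ge\widehat\theta^j/(1+2L^{-1}(1+\ln N)\widehat\theta^j)$, and since $t\mapsto t/(1+ct)$ is increasing and $\widehat\theta^j\ge\theta_\mathrm{min}$ by \eqref{eq:thetamax}, each summand is at least $\theta_\mathrm{min}/(1+2L^{-1}(1+\ln N)\theta_\mathrm{min})$; averaging over $j$ finishes part 1).

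For part 2), the monomorphic term of \eqref{eq:mean_steadystate} drops out because $f(0)=0$, and $\E_N\langle\Xc_\infty,f\rangle\ge 0$ because $f\ge 0$ and the polymorphic densities in \cref{prop:statapprox} are nonnegative. It remains to bound $\sum_{j}\sum_{u,v}\int_0^1 f(y)\mu_N^j(u,v,y)\,dy$. Fix a locus and an unordered pair, relabelled so that $\gamma:=\gamma_{uv}^j\ge 0$, and add the densities on the two opposite edges, using $\eta_v^j\lambda_{vu}=e^{2\gamma}\eta_u^j\lambda_{uv}$. A short computation collapses the interior parts to $\frac{2\eta_u^j\lambda_{uv}}{\Omega_N^j}\,\frac1y\,\frac{2\sinh(2\gamma(1-y))}{(1-y)(1-e^{-2\gamma})}$; applying $\sinh(az)/z\le\sinh(a)$ with $z=1-y$, $a=2\gamma$ bounds the last factor by $\frac{2\sinh(2\gamma)}{1-e^{-2\gamma}}=1+e^{2\gamma}$, so this is at most $\frac{2(\eta_u^j\lambda_{uv}+\eta_v^j\lambda_{vu})}{\Omega_N^j}\cdot\frac1y$. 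The boundary-layer parts on $(0,1/N)$ add up to $\frac{2\eta_u^j\lambda_{uv}}{\Omega_N^j}\bigl(N(1+e^{2\gamma})-2\omega_\gamma\bigr)$ (using $e^{2\gamma}\omega_{-\gamma}=\omega_\gamma$), which against $\int_0^{1/N}f(y)\,dy=f(0+)/N+o(1/N)$ contributes at most $\frac{2(\eta_u^j\lambda_{uv}+\eta_v^j\lambda_{vu})}{\Omega_N^j}f(0+)$ up to lower-order terms. Summing over pairs, $\sum_{u,v}\eta_u^j\lambda_{uv}=\widehat\theta^j/L$, and summing over $j$ with $\frac1L\sum_j\widehat\theta^j/\Omega_N^j=\widehat\theta_\eff$ gives the first inequality in \eqref{eq:frequency_only}; the second is part 1). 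Finally, $\int_0^1 y^{-1}f(y)\,dy<\infty$ forces $f(0+)=0$, so $f(0+)+\int_{1/N}^1 y^{-1}f\le\int_0^1 y^{-1}f$, which is \eqref{eq:f_integrable}.

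I expect the bookkeeping in part 2) to be the main obstacle: the two directed edges must be combined before any comparison with neutrality is possible — individually the density on the advantageous edge exceeds $1/y$ while the one on the disadvantageous edge falls short, and only the stationary weights, tilted towards the disadvantageous edge, make the sum come out below the neutral level — and one must track which remainders are $O(\ln N/N)$ or $o(1)$ and hence absorbed into the approximation already built into $\mu_N$. The two analytic inputs ($\sum\eta_u^j\theta_{uv}K_{\gamma_{uv}^j}\le 0$ and the monotonicity of $\sinh(az)/z$) are short calculus exercises once isolated.
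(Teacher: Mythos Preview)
Your proposal is correct and takes essentially the same approach as the paper. Part 1) is identical; in part 2) your combine-then-compare organization (sum the paired-edge density first, then bound it by $1/y$ via $\sinh(2\gamma(1-y))/(1-y)\le\sinh(2\gamma)$) is a minor rearrangement of the paper's compare-then-combine scheme (write each edge integral as neutral term minus remainder, then show the pairwise-summed remainder $R^j_{uv}=\tfrac{e^{2\gamma}-e^{-2\gamma}}{2\gamma}-\tfrac{e^{2\gamma(1-y)}-e^{-2\gamma(1-y)}}{2\gamma(1-y)}\ge 0$ is nonnegative), and both rest on exactly the same elementary inequality and the same boundary-layer replacement $N\int_0^{1/N}f\approx f(0+)$.
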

While the strength and direction of selection may vary arbitrarily within and between sites, \cref{thm:upper_bound} illustrates that the effect of selective forces on measures of genetic variation is only channeled through to the upper bounds via the average mutation rates $\widehat\theta_\eff$ and $\widehat\theta$, respectively. 
It is seen furthermore that the proportionality constant $\widehat\theta_\eff$ is contained inside an interval that does not depend on selection parameters, namely the interval formed by the leftmost and the rightmost estimate in \cref{thm:upper_bound}, 1).
As a corollary we observe that under the stronger assumption of homogeneous mutation rates, introduced in \cref{sec:multiple_loci}, then $\widehat\theta$ will be independent of any selective mechanisms, and the upper bounds in \cref{eq:frequency_only,eq:f_integrable} will coincide with the corresponding expressions for neutral evolution in \cref{lst:summary_statistics_neutral} ii). 

\begin{corollary}\label{cor:equal_mutation_rates}
For the case when the mutation rates are homogeneous over all vertices, i.e.\ $\theta_u=\theta$ for all $u\in\Vc$, then
\[
\widehat \theta =\widehat\theta^{\,0}=\theta
\]
and
\[
\frac{\theta}
{1+2L^{-1}(1+\ln N)\theta}
\le \widehat\theta_\eff \le \theta.
\]
Hence,
\[
 \E_N \langle \Xc_\infty,f \rangle
 \le 2\theta\, \Big\{f(0+) + \int_{1/N}^1\frac{ f(y)}{y}\,dy\Big\}
\]
and
\[
 \E_N \langle \Xc_\infty,f \rangle 
 \le 2\theta \int_0^1\frac{f(y)}{y}\,dy,
\]
respectively.
\end{corollary}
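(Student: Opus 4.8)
The plan is to derive \Cref{cor:equal_mutation_rates} directly from \Cref{thm:upper_bound} by specializing to the homogeneous case $\theta_u=\theta$ for all $u\in\Vc$. First I would recall that the quantities $\widehat\theta$, $\widehat\theta^{\,0}$, $\theta_{\mathrm{min}}$ and $\theta_{\mathrm{max}}$ were all defined via $\sum_{u\in\Vc}(\text{weight})\cdot\theta_u$ with weights that form a probability distribution on $\Vc$ — namely $\eta_u^j$ (averaged over loci) for $\widehat\theta$, the neutral solution $\eta_u$ for $\widehat\theta^{\,0}$, and the point masses achieving the min/max for $\theta_{\mathrm{min}}$, $\theta_{\mathrm{max}}$. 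Hence when every $\theta_u$ equals the common value $\theta$, each of these averages collapses to $\theta$: explicitly $\widehat\theta=\frac1L\sum_j\sum_u\eta_u^j\theta=\theta$ since $\sum_u\eta_u^j=1$, and likewise $\widehat\theta^{\,0}=\theta$ and $\theta_{\mathrm{min}}=\theta_{\mathrm{max}}=\theta$. This is exactly \cref{eq:thetahomogene} from \Cref{sec:multiple_loci}, so the first displayed equality of the corollary is immediate.

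Next I would substitute these equalities into the chain of inequalities in \Cref{thm:upper_bound}, part 1). Since $\theta_{\mathrm{min}}=\theta$ and $\theta_{\mathrm{max}}=\widehat\theta=\theta$, the bound
\[
\frac{\theta_{\mathrm{min}}}{1+2L^{-1}(1+\ln N)\theta_{\mathrm{min}}}\le \widehat\theta_\eff\le\widehat\theta\le\theta_{\mathrm{max}}
\]
reduces to exactly
\[
\frac{\theta}{1+2L^{-1}(1+\ln N)\theta}\le\widehat\theta_\eff\le\theta,
\]
which is the second claim. For the two functional bounds, I would take the function $f$ as in part 2) of \Cref{thm:upper_bound} and plug $\widehat\theta=\theta$ into \cref{eq:frequency_only}: the outermost bound $2\widehat\theta\{f(0+)+\int_{1/N}^1 y^{-1}f(y)\,dy\}$ becomes $2\theta\{f(0+)+\int_{1/N}^1 y^{-1}f(y)\,dy\}$, giving the first functional inequality; similarly \cref{eq:f_integrable} with $\widehat\theta=\theta$ gives $\E_N\langle\Xc_\infty,f\rangle\le 2\theta\int_0^1 y^{-1}f(y)\,dy$ under the added integrability hypothesis. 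Thus the corollary follows with essentially no new computation.

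The only point requiring a moment of care — and the mild "obstacle" — is confirming that the homogeneity assumption $\theta_u=\theta$ for all $u$ genuinely forces $\widehat\theta_\eff$ to remain sandwiched as stated even though $\widehat\theta_\eff=\sum_u\widehat\mu_u\theta_u$ uses the unnormalized weights $\widehat\mu_u=\frac1L\sum_j\eta_u^j/\Omega_N^j$ rather than a probability vector. Here one uses $\widehat\theta_\eff=\theta\cdot\frac1L\sum_j\frac1{\Omega_N^j}\sum_u\eta_u^j=\theta\cdot\frac1L\sum_j\frac1{\Omega_N^j}$ (again since $\sum_u\eta_u^j=1$), which is precisely the right-hand relation in \cref{eq:thetahomogene}; since each $\Omega_N^j\ge1$ one gets $\widehat\theta_\eff\le\theta$, and the lower bound is inherited verbatim from \Cref{thm:upper_bound}, 1) because $\theta_{\mathrm{min}}=\theta$. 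Everything else is direct specialization, so the proof is short: state $\widehat\theta=\widehat\theta^{\,0}=\theta$ via \cref{eq:thetahomogene}, read off the sandwich for $\widehat\theta_\eff$, and substitute into \cref{eq:frequency_only,eq:f_integrable}.
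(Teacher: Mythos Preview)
Your proposal is correct and follows essentially the same approach as the paper: verify $\widehat\theta=\widehat\theta^{\,0}=\theta$ by a direct calculation (the paper just says ``a simple calculation verifies'' this), then read off everything else from \Cref{thm:upper_bound}. Your extra paragraph on the ``obstacle'' is more careful than necessary, since $\widehat\theta_\eff\le\widehat\theta=\theta$ is already part of the chain in \Cref{thm:upper_bound}, 1), but it does no harm.
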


\begin{proof}[Proof of \cref{thm:upper_bound,cor:equal_mutation_rates}]
1) For each single locus $j$, 
\begin{align*}
\Omega_N^j & =1+\frac{2}{L}\sum_{u,v\in\Vc} 
   \eta_u^j\theta_{uv}(1+\ln N+K_{\gamma^j_{uv}})\\
   & = 1+\frac{2\widehat\theta^j}{L}(1+\ln N) 
   +\frac{2}{L}\sum_{u,v\in\Vc} 
   \eta_u^j\theta_{uv} K_{\gamma^j_{uv}}.
\end{align*}
Here, by rewriting the double sum over all vertices in $\Vc$ as the sum over all unordered pairs of vertices in $\Pc$ (see \cref{sec:stat_distr_undirected_edges}),
\begin{align*}
\sum_{u,v\in\Vc} 
   \eta_u^j\theta_{uv} K_{\gamma^j_{uv}}
   &=\sum_{\langle u,v \rangle\in\Pc} \{\eta_u^j\theta_{uv} K_{\gamma^j_{uv}}+\eta_v^j\theta_{vu} K_{\gamma^j_{vu}}\}\\
   &= \sum_{\langle u,v \rangle\in\Pc} \eta_u^j\theta_{uv}(1-e^{2\gamma_{uv}^j})K_{\gamma_{uv}^j}\le 0, 
\end{align*}
for every $\gamma_{uv}^j$. Hence $1\le \Omega_N^j\le 1+2\widehat\theta^j(1+\ln N)/L$ and therefore
\begin{equation*}
\begin{split}
    \widehat\theta\ge 
    \widehat\theta_\eff 
    &=\frac{1}{L}\sum_{j=1}^L \frac{\widehat\theta^j}{\Omega_N^j}\\
    &\ge  \frac{1}{L}\sum_{j=1}^L \frac{\widehat\theta^j}{1+2\widehat\theta^j(1+\ln N)/L}
    \ge \frac{\theta_\mathrm{min}}{1+2L^{-1}(1+\ln N) \theta_\mathrm{min}}.
\end{split}
\end{equation*}

\noindent
2) To prove the bound of $\E_N \langle \Xc_\infty,f \rangle$ in the second statement we take $f_{uu}(0)=f(0)=0$ in \cref{eq:mean_steadystate} and start from the representation
\begin{align*}
\E_N\langle \Xc_\infty,f\rangle
&=\sum_{j=1}^L \sum_{u,v\in\Vc} 
  \int_0^1 f(y) \mu^j_N(u,v,y)\,dy.
\end{align*}
As we apply Proposition \ref{prop:statapprox} it is convenient to have the auxiliary notation $J_{uv}(y)$ (only used in this proof)
\[
   J_{uv}(y)=\frac{
    1-e^{-2\gamma_{uv}(1-y)}}{2\gamma_{uv}(1-y)},
    \quad 0<y<1.
\]
Then
\begin{equation*}
\begin{split}
    &\int_0^1  f(y) \mu^j_N(u,v,y)\,dy
    = \frac{2\eta_u^j\theta_{uv}}{L\Omega^j_N} \Big\{ f(0+) +
    \int_{1/N}^1 \frac{f(y)}{y} \, \omega_{\gamma^j_{uv}} J_{uv}^j(y)\,dy\Big\}.
\end{split}
\end{equation*}
We partition the right hand side as
\begin{align*}
    \int_0^1  f(y) \mu^j_N(u,v,y)\,dy
 = \frac{2\eta_u^j\theta_{uv}}{L\Omega^j_N} 
 \Big\{ f(0+) +
    \int_{1/N}^1 \frac{f(y)}{y}\,dy\Big\}
    -R_N^j(u,v),
\end{align*}
with
\begin{equation*}   
    R_N^j(u,v) = \frac{2\eta_u^j\theta_{uv}}{L\Omega^j_N} \int_{1/N}^1 \frac{f(y)}{y} 
    \Big\{1-\omega_{\gamma_{uv}}^j J_{uv}^j(y) \Big\}\,dy.
\end{equation*}
Letting $R_N$ denote the sum
\[
    R_N=\sum_{j=1}^L \sum_{u,v\in\Vc} R_N^j(u,v),
\]
these considerations imply
\begin{align*}
    \E_N\langle \Xc_\infty,f\rangle
    &=\sum_{u,v\in\Vc} 
    \widehat\mu_u\,2\theta_{uv} 
    \Big\{ f(0+) + \int_{1/N}^1 \frac{f(y)}{y}\,dy\Big\}
    -R_N.
\end{align*}
To complete the proof it remains to show that $R_N\ge 0$. Now, 
\begin{equation*}   
    R_N = \sum_{j=1}^L  \frac{2}{L\Omega_N^j}
    \int_{1/N}^1 \frac{f(y)}{y}\sum_{u,v\in\Vc}
    \eta_u^j\theta_{uv} 
    \Big\{1-\omega_{\gamma^j_{uv}}
    J_{uv}^j(y)\Big\}\,dy.
\end{equation*}
Thus, it suffices to show, for each site $j$ and each frequency $y$,
\[
\sum_{u,v\in\Vc}
    \eta_u^j\theta_{uv}
    \big\{1-\omega_{\gamma^j_{uv}}
    J_{uv}^j(y)\big\} \ge 0.
\]
By rewriting the double summation as sum over all unordered pairs $\langle u,v \rangle\in\Pc$, the previous inequality has the equivalent representation
\begin{align*}
    \sum_{\langle u,v \rangle\in\Pc}\Big\{ 
    \eta_u^j\theta_{uv} 
    \big\{1-\omega_{\gamma^j_{uv}} J_{uv}^j(y)\big\}
    + \eta_v^j\theta_{vu} 
    \big\{1-\omega_{\gamma^j_{vu}} J_{vu}^j(y)\big\} \Big\}\ge 0.        
\end{align*}
By applying the detailed balance equation to each site and each edge, the task is to show
\begin{align*}
    \sum_{\langle u,v \rangle\in\Pc} 
    \eta_u^j\theta_{uv}
    \Big\{ 
    \big\{1 -\omega_{\gamma^j_{uv}}J_{uv}^j(y)\big\}
    + \frac{\omega_{\gamma_{uv}^j}}
    {\omega_{\gamma_{vu}^j}}
    \big\{1-\omega_{\gamma^j_{vu}}
    J_{vu}^j(y)\big\} \Big\}\ge 0.  
\end{align*}
Equivalently,
\begin{align*}
    \sum_{\langle u,v \rangle\in\Pc} 
    \eta_u^j\theta_{uv}\omega_{\gamma^j_{uv}} R^j_{uv}\ge 0,
    \quad 
    R^j_{uv}=\omega_{\gamma^j_{uv}}^{-1}-J_{uv}^j(y)
    + \omega_{\gamma_{vu}^j}^{-1}-J_{vu}^j(y). 
\end{align*}
Next we use the anti-symmetric relation (\ref{def:timereversible_sel}) for the selection coefficients. If we take a fixed site $j$ and an edge which connects two vertices, $u$ and $v$ say, and let $\gamma=\gamma^j_{uv}=-\gamma^j_{vu}$ be one of the relevant selection coefficients, then
it is straightforward to check that $R^j_{uv}$ is indeed nonnegative for any signed parameter $\gamma$ and $0<y<1$,
\[
    R^j_{uv}=\frac{e^{2\gamma}-e^{-2\gamma}}{2\gamma}
    -\frac{e^{2\gamma(1-y)}-e^{-2\gamma(1-y)}}{2\gamma(1-y)}\ge 0.
\]
This verifies the claim $R_N\ge 0$ and yields 
\begin{equation*}
\begin{split}
    \E_N\langle \Xc_\infty,f\rangle
    &\leq\sum_{j=1}^L \sum_{u,v\in\Vc} \frac{2\eta_u^j\theta_{uv}}{L\Omega^j_N} 
    \Big\{ f(0+) + \int_{1/N}^1 \frac{f(y)}{y}\,dy\Big\} \\
    &= 2\widehat\theta_\eff \Big\{ f(0+) + \int_{1/N}^1 \frac{f(y)}{y}\,dy\Big\}.
\end{split}
\end{equation*}
We note that this proof actually provides a more general result for functions $f$ on $D$ that are not independent of $u$ and $v$ but fulfill $f_{uv}(y)=f_{vu}(y)\ge 0$. Then 
\[
    0\le \E_N \langle \Xc_\infty,f\rangle \le 
    2 \sum_{u,v\in \Vc}  \widehat\mu_u\theta_{uv}  \Big\{f_{uv}(0+) + 
    \int_{1/N}^1\frac{ f_{uv}(y)}{y}\,dy\Big\}.
\]
The other statement in part 2) of \cref{thm:upper_bound} is straightforward under the additional assumption. For \cref{cor:equal_mutation_rates}, a simple calculation verifies that $\widehat\theta=\widehat\theta^{\,0}=\theta$ if $\theta_u=\theta$ for every $u\in\Vc$. The rest of the corollary follows directly from \cref{thm:upper_bound}.
\end{proof}

\subsection{The number of segregating sites and genetic diversity} 

We are now in position to consider concrete measures of genetic variation in a population under the general model with selection and compare with the known properties of these measures for neutral evolution as listed in \cref{lst:summary_statistics_neutral}. \Cref{thm:upper_bound} provides general estimates valid for arbitrary coefficients of directional selection. First, \Cref{thm:upper_bound} applied with the functions $f^\partial$ and $f^\circ$ yield bounds which directly relate to the listed items iii) and iv) of \cref{lst:summary_statistics_neutral}. In particular, the expected number of segregating sites under selection satisfies 
\begin{equation}\label{eq:upper_bound_nmb_segr_sites}
\E_N \langle \Xc_\infty,f^\circ\rangle
    \le 2\widehat \theta_\eff \,(1+\ln N),
\quad
\widehat\theta_\eff=\sum_{u\in\Vc}\widehat\mu_u\theta_u.
\end{equation}
The parallel result for the number of segregating sites in a sample, i.e.\ \cref{thm:upper_bound} applied with the function $f$ specified in item v), reads
\begin{align}\label{eq:upper_bound_segr_sites_sample}
    S_{N,L}^m &= \E_N \langle \Xc_\infty,f\rangle
     \le 2\widehat\theta_\eff \,\sum_{k=1}^{m-1} \frac{1}{k}.
\end{align}
Similarly, the expected genetic diversity in the population satisfies $\pi\leq 2\widehat\theta_\eff/L$, which extends vi) of \cref{lst:summary_statistics_neutral}.

For specific functions $f$  we may of course extract more detailed information in addition to the upper bounds discussed here. It is again convenient to carry out summation over unordered pairs of graph vertices. 
For this, let us assume that $f_{uu}(0)=0$, $f_{uv}(y)=f(y)$, and $\int_0^1 y^{-1}f(y)\,dy<\infty$. Then
\begin{align*}
    &\E_N \langle \Xc_\infty,f\rangle =  \\
    &\quad 
    \sum_{j=1}^L \sum_{\langle u,v\rangle\in\Pc} \frac{2\eta_u^j\theta_{uv}\omega_{\gamma^j_{uv}}} {L\Omega_N^j} 
    \int_0^1 \frac{f(y)}{y(1-y)} 
    \Big(\frac{e^{2\gamma_{uv}^j(1-y)}
    -e^{-2\gamma_{uv}^j(1-y)}}{2\gamma^j_{uv}}\Big)\,dy.
\end{align*}
In particular, for $f(y)=2y(1-y)$, 
\begin{align*}
\pi
= \E_N \langle \Xc_\infty,f\rangle/L
= \sum_{\langle u,v\rangle\in\Pc} \frac{2\theta_{uv}}{L}\frac{1}{L} 
\sum_{j=1}^L \frac{2}{\Omega_N^j} \frac{\eta_u^j}{\omega_{\gamma_{vu}^j}}.
\end{align*}

The functional $f^\circ$ to obtain the expected number of segregating sites does not fulfill the condition $\int_0^1 y^{-1}f^\circ(y)\,dy<\infty$. Nevertheless we obtain an explicit representation of the expected number of segregating sites. Since the probability that a single site $j$ is polymorphic is $(\Omega_N^j-1)/\Omega_N^j$, the expected number of segregating sites in a sequence of length $L$ is
\begin{equation*}
\begin{split}
    &\E_N\langle\Xc_\infty,f^\circ\rangle
    = \sum_{j=1}^L \frac{\Omega_N^j-1}{\Omega_N^j}\\
    &= \sum_{j=1}^L \frac{2\sum_{\langle u,v\rangle\in\Pc}\eta_u^j\theta_{uv}\{1+\ln N+K_{\gamma_{uv}^j} + e^{2\gamma_{uv}^j}(1+\ln N-K_{\gamma_{uv}^j})\}}{L + 2\sum_{\langle u,v\rangle\in\Pc}\eta_u^j\theta_{uv}\{1+\ln N+K_{\gamma_{uv}^j} + e^{2\gamma_{uv}^j}(1+\ln N-K_{\gamma_{uv}^j})\}}.
\end{split}
\end{equation*}

\section{Discussion}

We have set up a multi-allele, multi-locus Wright-Fisher graph model to derive rigorous upper bounds for a wide class of summary statistics of genetic variation in \cref{thm:upper_bound}. For any representative measure in this class the upper bound is a multiple of the average effective mutation rate $\widehat\theta_\textrm{eff}$. The multiplicative factor is independent of directional selection and purely depends on the measure of genetic diversity. Hence, mutation and directional selection only affect the upper bound through $\widehat\theta_\textrm{eff}$ or $\widehat\theta$. To obtain selection-independent upper bounds for arbitrary mutation rates, $\widehat\theta$ can be replaced with e.g.\ $\theta_\textrm{max}$.
The additional observation in \cref{cor:equal_mutation_rates} that homogeneous mutation rates make $\widehat\theta$ independent of directional selection shows that the upper bounds are the same as those for neutral evolution, and hence verifies the general presumption that directional selection reduces genetic variation.

There exists a number of deterministic models to verify the reduction of genetic variation due to directional selection \citep{Feldman1971,Novak2017,Pontz2020}, also referred to as "constant frequency-independent selection". These models are based on replicator equations, that were initially used by \citet{Feldman1971} in this field. Within this deterministic modeling approach analytical results on the interactions between loci due to physical linkage and/or epistasis can be derived. On the other hand, mutation and genetic drift as additional evolutionary forces are often not taken into account. 
Here, we take a complementary approach and incorporate mutation and genetic drift, while ignoring interactions among loci. In the following section we will illustrate the relevance of this setting for studying the interaction between mutation and fixation bias. Finally, we demonstrate that a distribution of fitness effects of e.g.\ protein-coding sequences naturally derives from our modeling setup.

\subsection{Interaction between mutation and fixation bias}

We say that there is a mutation bias between two allelic types $u,v\in\Vc$, if mutation from one type to the other occurs more often than in the reverse direction, i.e.\ if $\theta_{uv}\neq\theta_{vu}$, and there is fixation bias between $u$ and $v$ whenever 
$\gamma_{uv}\not=0$. Thus, fixation bias comprises non-zero directional selection. 
In addition to selection, fixation bias can be caused by biased gene conversion, a mechanism which is probabilistically equivalent to directional selection \citep{Nagylaki1983}. 
The Wright-Fisher graph model allows to investigate the combined impact of mutation bias and fixation bias on the stationary distribution and population genetic measures derived as functionals of the stationary distribution. Relevant combinations of mutation bias and fixation bias may act in opposite directions and hence counterbalance their influence on the stationary distribution or act in the same direction and thus reinforce each other.

First of all we notice that given a collection of biased or unbiased mutation  rates $(\theta_{uv})$, any desired set of boundary distributions $(\eta^j_u)_{1\le j\le L}$ on $L$ loci is obtained from the detailed balance equations (\ref{def:detailed_balanceN}) by putting
\begin{equation}\label{eq:relationship_mutation_selection}
    \gamma^j_{uv}=\frac{1}{2}\ln\Big(\frac{\eta_v^j\theta_{vu}}{\eta_u^j\theta_{uv}}\Big).
\end{equation}
As a reference case, the assignment $2\gamma^j_{uv}=\ln (\theta_{vu}/\theta_{uv})$ yields uniform distributions over all monomorphic states. 
These relations are well-known and appear in studies of mutation-selection models such as \cite{Halpern1998,McVean1999}.
Next, including polymorphic loci, for a scenario with homogeneous  mutation rates and prescribed selection parameters, 
\Cref{cor:equal_mutation_rates} shows that genetic variation overall behaves much in the same way as for the case with no fixation bias. Homogeneous mutation, however, is arguably not necessarily realistic for genetic data, except perhaps on graphs of constant degree, i.e., graphs with the same number of edges attached in each vertex.  
The graph in \cref{fig:stopcodons} for the three stop codons provides a suitable example of a graph of mixed degree. A particular mutation bias would be required to give the relation $\theta_{TAG\to TAA}=\theta_{TAA\to TAG}+\theta_{TAA\to TGA}=\theta_{TGA\to TAA}$ for homogeneous mutation.
\begin{figure}[!ht]
    \centering
    \includegraphics{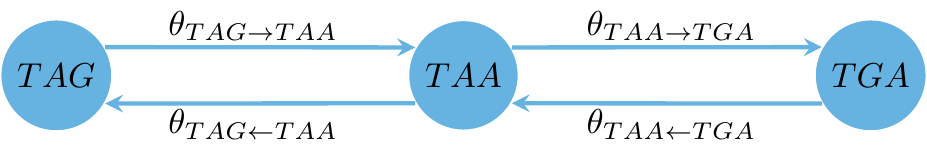}
    \caption{The graph for stop codons. A specific mutation bias is required for homogeneous mutation rates over all stop codons.
    }
    \label{fig:stopcodons}
\end{figure}
Hence, whenever the total mutation rates among types differ, it is worth studying the combined impact of mutation bias and fixation bias on the upper bounds in \cref{thm:upper_bound}. 

\paragraph{Four nucleotide model}

The interaction of mutation and fixation bias is of interest in the four nucleotide model (\cref{fig:AFSs}A, nodes representing nucleotides), as mutation rates are frequently found to be different among the nucleotides \citep{Stoltzfus2015,Long2018}.
Both biases impact for example the sequence content, the abundance of each nucleotide in the sequence, which can be obtained from \cref{eq:mean_steadystate}. 

The four nucleotide model can be reduced to a model with two types by grouping the nucleotides into two classes. A natural classification arises when studying nucleotide composition in the double-stranded DNA and considering A:T base-pairs as one group and C:G base-pairs as another. This classification, weak (A and T) and strong (C and G) bases, is commonly used to describe gBGC \citep{Duret2009,Mugal2015}. The fixation bias towards GC over AT nucleotides in the presence of gBGC interacts with the mutation bias between the two classes, which acts in the opposite direction in several taxa, i.e.\ mutations from GC to AT occur much more frequently than mutations from AT to GC \citep{Long2018}. 
This illustrates that the two-type model can be relevant to describe multiple alleles that can be classified into two types.
The traditionally studied two-type case is that of identically distributed loci, $\gamma^j=\gamma$ for each $j=1,\ldots,L$ \citep{Wright1931,Li1987,Bulmer1991,McVean1999}.
The resulting three-parameter situation is frequently applied in models of gBGC \citep{Muyle2011,Maio2013,Lachance2014} and analytical results of the biallelic mutation-selection-drift model have been derived \citep{Vogl2012,Vogl2015}.

\paragraph{Interaction of mutation and fixation bias in a two-type model}

We consider the graph with two types, $u$ and $v$, in which the dynamics at a fixed locus $j$ are determined by three parameters $\theta_u$, $\theta_v$ and $\gamma^j=\gamma_{uv}^j$, $j=1,\ldots,L$.
Clearly, by (\ref{def:detailed_balanceN}), 
\begin{equation}\label{eq:eta_two_types}
    (\eta^j_u,\eta^j_v) = \Big(\frac{\theta_v} {\theta_v+\theta_u e^{2\gamma^j}}, 
    \frac{\theta_u e^{2\gamma^j}}{\theta_v+\theta_u e^{2\gamma^j}}\Big).
\end{equation}
Under the assumption $\gamma^j=\gamma$ for all sites, we may summarize the effect of varying $\theta_u$, $\theta_v$, and $\gamma$ in \cref{eq:eta_two_types}, as
\begin{equation*}
    \frac{\eta_u}{\eta_v} = \frac{\theta_v}{\theta_u} e^{-2\gamma} 
    \begin{cases}
    >1 & \mathrm{if}\, \gamma<\frac{1}{2}\ln(\theta_v/\theta_u),\\
    =1 & \mathrm{if}\, \gamma=\frac{1}{2}\ln(\theta_v/\theta_u),\\
    <1 & \mathrm{if}\, \gamma>\frac{1}{2}\ln(\theta_v/\theta_u).
    \end{cases}
\end{equation*}
To illustrate the results obtained in \cref{thm:upper_bound}, the upper bounds are controlled by 
\begin{align*}
\widehat\theta_\eff 
=\frac{1}{L}\sum_{j=1}^L
\frac{1}{\Omega_N^j}
\frac{\theta_u\theta_v(1+e^{2\gamma^j})}{\theta_v+\theta_u e^{2\gamma^j}}
\le \widehat\theta 
=\frac{1}{L}\sum_{j=1}^L
\frac{\theta_u\theta_v(1+e^{2\gamma^j})}{\theta_v+\theta_u e^{2\gamma^j}}.
\end{align*}
To some degree these averaged mutation rates still depend on the selection coefficients. However, 
\begin{equation*}
\widehat\theta_\eff
\le \theta_{\max}
\,\frac{1}{L}
\sum_{j=1}^L\frac{1}{\Omega_N^j}\le \theta_{\max},
\end{equation*}
where the averaging sum is an estimate of the fraction of monomorphic loci in the sequence.  It is of interest nonetheless to compare with the harmonic mean of the mutation rates appearing under neutral evolution, namely
\begin{align*}
\widehat\theta^{\,0}_\eff 
=\frac{1}{\Omega_N}
\frac{2\theta_u\theta_v}{\theta_v+\theta_u}
\le  \widehat\theta^{\,0} 
=\frac{2\theta_u\theta_v}{\theta_v+\theta_u}.
\end{align*}
Considering the ratio $\widehat\theta/\widehat\theta^{\,0}$ for $\gamma^j=\gamma$, $j=1,\ldots,L$, we obtain the relations
\begin{equation*}
    \frac{\widehat\theta}{\widehat\theta^{\,0}}
    = \frac{(\theta_u+\theta_v)(1+e^{-2\gamma})}{2(\theta_u+\theta_v e^{-2\gamma})}
    \begin{cases}
    >1 & \mathrm{if }\, (\theta_u>\theta_v \land \gamma<0) \lor (\theta_u<\theta_v \land \gamma>0),\\
    =1 & \mathrm{if }\, \gamma=0 \lor \theta_u=\theta_v,\\
    <1 & \mathrm{if }\, (\theta_u<\theta_v \land \gamma<0) \lor (\theta_u>\theta_v \land \gamma>0).
    \end{cases}
\end{equation*}
This implies that $\widehat\theta>\widehat\theta^{\,0}$ if mutation bias is opposing fixation bias and $\widehat\theta<\widehat\theta^{\,0}$ if mutation and fixation biases enhance each other. 

\begin{figure}[!ht]
    \centering
    \includegraphics[width=\textwidth]{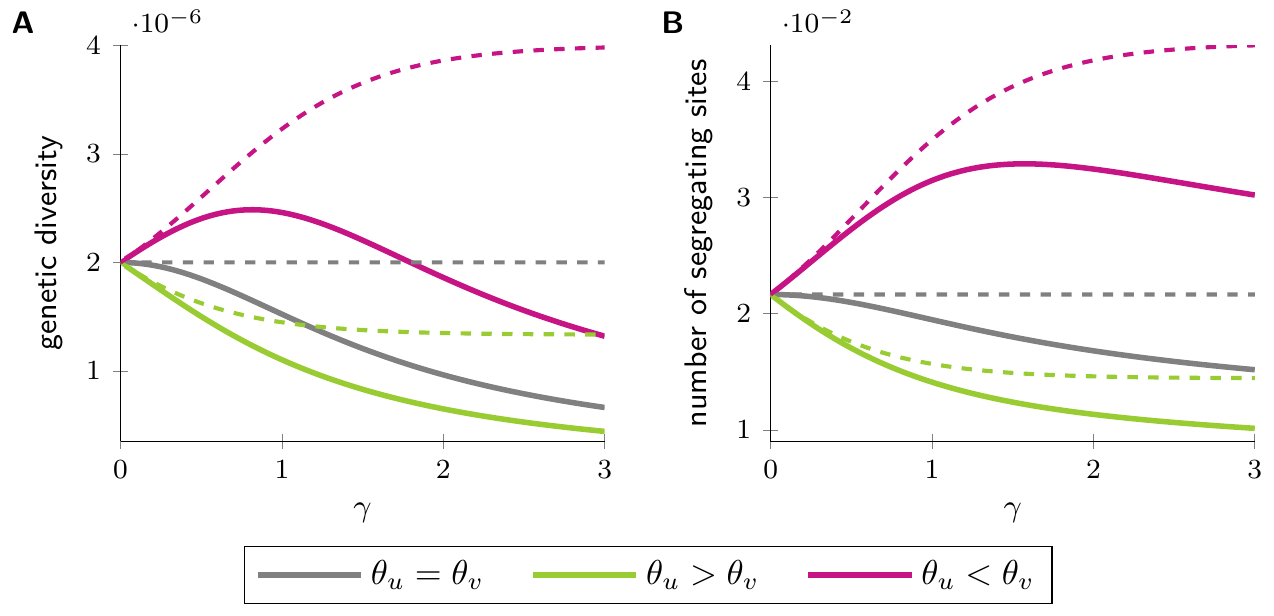}
    \caption{Genetic diversity $\pi$ (solid lines) and its upper bound $2\widehat\theta_\mathrm{eff}/L$ (dashed lines) in panel A and the expected number of segregating sites and its upper bound $2\widehat\theta_\mathrm{eff} (1+\ln N)$ (dashed lines) in panel B for different combinations of mutation parameters in a two-type model with equal selection coefficient among loci. Gray curves: $\theta_u=\theta_v=1.5\cdot10^{-3}$, green curves: $\theta_u=3\cdot10^{-3}>\theta_v=1\cdot10^{-3}$, pink curves: $\theta_u=1\cdot10^{-3}<\theta_v=3\cdot10^{-3}$. Other parameters: population size $N=500$ and number of loci $L=1500$. 
    }
    \label{fig:genetic_diversity}
\end{figure}

Such insights can be used together with the results of \cref{thm:upper_bound}, for instance, considering the case of genetic diversity (\cref{fig:genetic_diversity}), 
\begin{equation*}
\begin{split}
    \pi &= 
    \frac{1}{L\Omega_N}
    \frac{4\theta_u\theta_v} {\theta_v+\theta_u e^{2\gamma}} 
    \frac{e^{2\gamma}-1}{2\gamma}
    \le 2\frac{\widehat\theta_\eff}{L}=
    \frac{1}{L\Omega_N}
    \frac{2\theta_u\theta_v(1+e^{2\gamma})}{\theta_v+\theta_u e^{2\gamma}}.
\end{split}
\end{equation*}
Without mutation bias or with a mutation bias that enhances the fixation bias, genetic diversity decreases monotonically as selection becomes stronger (gray and green solid curves in \cref{fig:genetic_diversity}A). If mutation bias counteracts fixation bias, genetic diversity first increases in the weak selection regime compared to neutral evolution until a maximum is reached for an intermediate selection coefficient, and decreases thereafter for stronger selection (pink solid curve in \cref{fig:genetic_diversity}A). A similar behavior is observed and discussed in \citet{McVean1999}. 
The upper bound (dashed lines in \cref{fig:genetic_diversity}A) is constant for equal mutation rates, decreases monotonically if mutation and fixation bias reinforce each other, and increases monotonically for counterbalancing biases. 
The behavior of the expected number of segregating sites and its upper bounds under the different combinations of mutation rates is very akin to the curves for genetic diversity (\cref{fig:genetic_diversity}B). 

The scenario depicted here where all loci have equal selective pressure that can become arbitrarily large is rather artificial. 
In many taxa the genome-wide average of fixation bias in gBGC takes a value in the weak selection regime \citep{Maio2013,Glemin2015,Galtier2018,Boman2021}. Likewise, according to the nearly neutral theory \citep{Ohta1973,Ohta1976,Ohta1992} polymorphisms segregate in a population if selection is neutral or nearly neutral. In this selection regime the upper bounds capture the behavior of the measure of genetic variation well.
Only when selection becomes strong, the upper bounds become more generous. However, strong selection immediately removes genetic variation and consequently, the interaction of mutation and fixation bias in the strong selection regime is less relevant when considering a large collection of loci. 

\subsection{Distribution of fitness effects}\label{sec:DFE}

The present model equipped in each locus with a static fitness landscape as in \cref{def:fitness_sel} can be applied for example to protein-coding sequences. 
In this case every locus represents a nucleotide triplet where the fitness of each possible codon type is particular to the specific locus. 
However, the number of fitness parameters required to capture the selection effects over many loci and all codons may quickly grow out of hand \citep[for a recent review on this topic see\ ][]{Youssef2021}. 
An alternative view of modeling natural selection is that of choosing fitness parameters from a representative distribution. In this approach, the distribution of fitness effects (DFE) should reduce the parameter space but preserve some of the characteristic features.     
Generally, the DFE is composed of a distribution of negative selection coefficients, a distribution of positive selection coefficients, and a proportion of selection coefficients at zero representing neutral evolution. 
The steady-state of the process $(X_t)_{t\ge 0}$ in our modeling framework allows deriving a DFE which is informative about the fraction of mutations in equilibrium that are beneficial, deleterious and neutral, respectively.

\paragraph{The distribution of fitness effects of novel mutations} 

The common understanding of a DFE is the distribution of fitness effects of all novel mutations that occur in a population \citep{EyreWalker2007}. As we apply a boundary mutation model, the weights of the selection coefficients are given by the steady states of the Wright-Fisher graph model on the boundary, $\eta_u^j/\Omega_N^j, u\in\Vc$, and the corresponding mutation intensities, $\theta_{uv}, u,v\in\Vc$. We may assume that the mutation intensities have been reduced in advance to discount for the presence of any strongly deleterious mutations. Hence, the DFE on the graph is a discrete probability distribution function on the real line,
\begin{equation}
    H_{\textrm{dfe}}(\gamma)
    =\widehat\theta_\eff^{-1} \, \frac{1}{L} \sum_{j=1}^L \sum_{u,v\in\Vc} \frac{\eta_u^j}{\Omega_N^j}  \theta_{uv} 1_{\{\gamma_{uv}^j\le \gamma\}}, \quad -\infty<\gamma<\infty,
\end{equation}
with jumps at each of the finite number of values $\gamma_{uv}^j, u,v\in\Vc, j=1,\ldots,L$. 
In \cref{prop:DFE} we obtain some key properties of the DFE.

\begin{proposition}\label{prop:DFE}
For any choice of selection coefficients on the Wright-Fisher graph model, 
\begin{itemize}
\item[i)] the probability weight for each negative selection coefficient, $\gamma_{uv}^j<0$, is always larger or equal the weight of the corresponding positive value, that is,
\[
H_\mathrm{dfe}(\gamma_{uv}^j)-H_\mathrm{dfe}(\gamma_{uv}^j-)
\ge H_\mathrm{dfe}(-\gamma_{uv}^j)-H_\mathrm{dfe}(-\gamma_{uv}^j-),
\]
\item[ii)] the average selection load is deleterious, in the sense
\[
\widehat \gamma= \widehat\theta_\eff^{-1}\, \frac{1}{L} \sum_{j=1}^L \sum_{u,v\in\Vc} \frac{\eta_u^j}{\Omega_N^j} \theta_{uv}  \gamma_{uv}^j 
\leq 0,
\]
\item[iii)] the total contribution to positive selection is
\[
1-H_\mathrm{dfe}(0)<1/2.
\]
\end{itemize}
\end{proposition}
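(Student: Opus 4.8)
The plan is to reduce all three statements to a single elementary fact about how detailed balance forces $\eta_u^j$ to be small exactly when $\gamma_{uv}^j$ is large. Pair up the two directed edges $e_{uv}$ and $e_{vu}$ associated with an unordered pair $\langle u,v\rangle\in\Pc$ at a fixed locus $j$, and write $\gamma=\gamma_{uv}^j=-\gamma_{vu}^j$. The jump in $H_{\mathrm{dfe}}$ contributed by the directed edge $e_{uv}$ at the value $\gamma$ is proportional to $\eta_u^j\theta_{uv}$, while the jump contributed by $e_{vu}$ at the value $-\gamma$ is proportional to $\eta_v^j\theta_{vu}$. Detailed balance \cref{def:detailed_balanceN}, rewritten using \cref{def:omega} and \cref{eq:symmetry_omega}, gives $\eta_u^j\theta_{uv}=\eta_v^j\theta_{vu}\,e^{2\gamma}$, or equivalently $\eta_v^j\theta_{vu}=\eta_u^j\theta_{uv}\,e^{-2\gamma}$. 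So for $\gamma>0$ the weight attached to $\gamma$ exceeds the weight attached to $-\gamma$ whenever the roles place $v$ as the favoured type; more carefully, writing $a_{uv}=\eta_u^j\theta_{uv}/(L\Omega_N^j\widehat\theta_\eff)$ for the normalized atom at $\gamma_{uv}^j$, detailed balance says $a_{vu}=a_{uv}e^{-2\gamma}$ with $\gamma=\gamma_{uv}^j$. This is the single identity I will use throughout.

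For part i) I would argue as follows. Fix a value $c<0$ appearing among the $\gamma_{uv}^j$. Every atom of $H_{\mathrm{dfe}}$ at $c$ comes from some directed edge $(u,v,j)$ with $\gamma_{uv}^j=c$; by the identity above, its companion edge $(v,u,j)$ has $\gamma_{vu}^j=-c>0$ and normalized weight $a_{vu}=a_{uv}e^{-2c}$. Since $c<0$ we have $e^{-2c}>1$, hence $a_{uv}\le a_{vu}$ for each such matched pair. But the atom of $H_{\mathrm{dfe}}$ at $c$ is the sum of $a_{uv}$ over all edges with $\gamma_{uv}^j=c$, and the atom at $-c$ is the sum of $a_{vu}$ over all edges with $\gamma_{vu}^j=-c$, which is the same index set (the companion map is a bijection between the two families). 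Summing the pointwise inequality over that index set gives exactly the claimed inequality on jump sizes, $H_{\mathrm{dfe}}(\gamma_{uv}^j)-H_{\mathrm{dfe}}(\gamma_{uv}^j-)\ge H_{\mathrm{dfe}}(-\gamma_{uv}^j)-H_{\mathrm{dfe}}(-\gamma_{uv}^j-)$.

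Parts ii) and iii) then follow by grouping the expectation over $\Pc$. For iii), pair each term in $1-H_{\mathrm{dfe}}(0)=\sum a_{uv}1_{\{\gamma_{uv}^j>0\}}$ with its companion: for a matched pair with $\gamma=\gamma_{uv}^j>0$ the two contributions to $H_{\mathrm{dfe}}(0)-H_{\mathrm{dfe}}(0-)+\text{(rest)}$ split as $a_{uv}$ landing in $\{\gamma>0\}$ and $a_{vu}=a_{uv}e^{-2\gamma}$ landing in $\{\gamma<0\}$, with $a_{uv}<a_{vu}\cdot$ nothing — rather $a_{uv}<a_{vu}+a_{uv}$ is not what is needed; instead note $a_{uv}=a_{vu}e^{2\gamma}>a_{vu}$ would go the wrong way, so I must be careful: I will instead show $a_{uv}<\tfrac12(a_{uv}+a_{vu}+\text{neutral mass})$ is false in general and therefore the correct route is to compare the positive mass against the negative mass directly. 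Since part i) already shows the negative-$\gamma$ atom dominates the positive-$\gamma$ atom for every matched pair with strictly negative coefficient, summing over all pairs with $\gamma_{uv}^j\ne0$ gives $\sum_{\gamma<0}a\ge\sum_{\gamma>0}a$, and because the total mass is $1$ with a nonnegative (and, for the strict inequality, positive whenever any $\gamma\ne0$) contribution at $0$, we get $\sum_{\gamma>0}a<1/2$, i.e.\ $1-H_{\mathrm{dfe}}(0)<1/2$; strictness uses that at least one neutral or negative atom carries positive mass, which holds because the graph is irreducible and $\widehat\theta_\eff>0$. For ii), $\widehat\gamma=\sum_{\langle u,v\rangle,j}(a_{uv}\gamma_{uv}^j+a_{vu}\gamma_{vu}^j)$ where, with $\gamma=\gamma_{uv}^j$, the bracket equals $\gamma(a_{uv}-a_{vu})=\gamma\,a_{uv}(1-e^{-2\gamma})\le0$ for every sign of $\gamma$, since $\gamma$ and $1-e^{-2\gamma}$ always share sign; summing gives $\widehat\gamma\le0$.

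The routine parts are the partial-summation bookkeeping and checking the companion map is a bijection; the one place to be attentive — and the main obstacle I flag — is getting the direction of the detailed-balance inequality right and correctly matching atoms when several distinct edges (possibly at different loci) carry the same numerical selection coefficient, so that the statement about jump sizes at a value $\gamma_{uv}^j$ is read as a statement about the aggregated atom of the mixture $H_{\mathrm{dfe}}$, not about a single summand.
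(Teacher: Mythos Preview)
Your overall strategy—pair the directed edges $e_{uv}$ and $e_{vu}$, use detailed balance to compare the two atoms, then sum over pairs—is exactly the paper's approach. The flaw is that you have the detailed-balance identity backwards. From \cref{def:detailed_balanceN} and \cref{eq:symmetry_omega} one gets
\[
\eta_v^j\theta_{vu}
=\eta_u^j\theta_{uv}\,\frac{\omega_{\gamma_{uv}^j}}{\omega_{\gamma_{vu}^j}}
=\eta_u^j\theta_{uv}\,e^{2\gamma_{uv}^j},
\]
so in your notation $a_{vu}=a_{uv}\,e^{2\gamma}$, not $a_{uv}\,e^{-2\gamma}$. With the correct sign, for $c=\gamma_{uv}^j<0$ the companion weight at $-c$ is $a_{vu}=a_{uv}e^{2c}<a_{uv}$, which \emph{is} the inequality in i). As written, your computation actually yields $a_{uv}\le a_{vu}$, the reverse of what the proposition asserts, and you then assert the correct conclusion without it following from your chain.

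The same sign slip breaks part ii). You write the pair contribution as $\gamma\,a_{uv}(1-e^{-2\gamma})$ and claim it is $\le 0$ because ``$\gamma$ and $1-e^{-2\gamma}$ always share sign''—but sharing sign makes the product \emph{nonnegative}. With the correct relation $a_{vu}=a_{uv}e^{2\gamma}$ the bracket becomes $\gamma\,a_{uv}(1-e^{2\gamma})$, and now $\gamma$ and $1-e^{2\gamma}$ have \emph{opposite} signs, giving $\le 0$ as required; this is precisely the paper's computation. Your argument for iii) is fine once i) is repaired (the paper likewise treats iii) as an immediate consequence of i)). Ironically, the ``main obstacle'' you flagged—getting the direction of detailed balance right—is exactly where the proof went wrong.
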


\begin{proof}
To prove i) we use the alternative representation of $H_\mathrm{dfe}$ summing over pairs of vertices and apply detailed balance on the edges between them, 
\begin{align*}
    H_\mathrm{dfe}(\gamma)
    &=\widehat\theta_\eff^{-1} \frac{1}{L}\sum_{j=1}^L
    \sum_{\langle u,v \rangle\in\Pc} \frac{1}{\Omega_N^j}
    \Big\{\eta_u^j \theta_{uv} 1_{\{\gamma_{uv}^j\le \gamma\}} + \eta_v^j \theta_{vu} 1_{\{\gamma_{vu}^j\le \gamma\}} \Big\}\\
    &= \widehat\theta_\eff^{-1} \frac{1}{L}\sum_{j=1}^L
    \sum_{\langle u,v \rangle\in\Pc} \frac{\eta_u^j}{\Omega_N^j} \theta_{uv} \Big\{1_{\{\gamma_{uv}^j\le \gamma\}} + e^{2\gamma_{uv}^j}1_{\{-\gamma_{uv}^j\le \gamma\}} \Big\}.
\end{align*}
For each fixed negative selection coefficient $\gamma=\gamma_{uv}^j<0$, we conclude that the jump size $H_\mathrm{dfe}(\gamma)-H_\mathrm{dfe}(\gamma-)$ is proportional to $1$, while the accompanying jump on the side of positive selection coefficients, $H_\mathrm{dfe}(-\gamma)-H_\mathrm{dfe}(-\gamma-)$, equals the same proportionality constant times $e^{2\gamma}$. Hence property i) follows since $1\geq e^{2\gamma}$.
For claim ii) we have, similarly,
\begin{align*}
    \widehat\gamma
    &=\widehat\theta_\eff^{-1} \frac{1}{L}\sum_{j=1}^L \sum_{\langle u,v \rangle\in\Pc} \frac{\eta_u^j}{\Omega_N^j} \theta_{uv} \gamma_{uv}^j \{1-e^{2\gamma^j_{uv}}\}.
\end{align*}
This quantity is nonpositive, since $\gamma_{uv}^j(1-e^{2\gamma_{uv}^j}) \leq 0$ for all $\gamma_{uv}^j, j=1,\ldots,L$. It follows that $\widehat\gamma\le 0$. 
Claim iii) is merely a rephrased conclusion of i).
\end{proof}

To help interpret the shape of $H_\mathrm{dfe}(\gamma)$, let us suppose that the $\gamma$-values for negative selection can be well approximated by a continuous distribution with density function $g(x)$, $x<0$, on the negative half line. It then follows from the proof of \cref{prop:DFE} that the associated positive $\gamma$-values have a density proportional to $g(-x) e^{-2x}$, $x>0$. The full approximation on the real line is then obtained by normalizing the contributions for $x<0$, $x>0$ and possibly an atom for neutral mutations at $x=0$. \Cref{fig:dfe} displays the resulting densities for the case of $g$ being an exponential, $g(x)=e^{-|x|}$ (red), or a gamma distribution, $g(x)=|x|^{a-1}e^{-|x|}/\Gamma(a)$ (blue for $a=2$ and light blue for $a=0.15$). 
\begin{figure}
    \centering
    \includegraphics[width=0.7\textwidth]{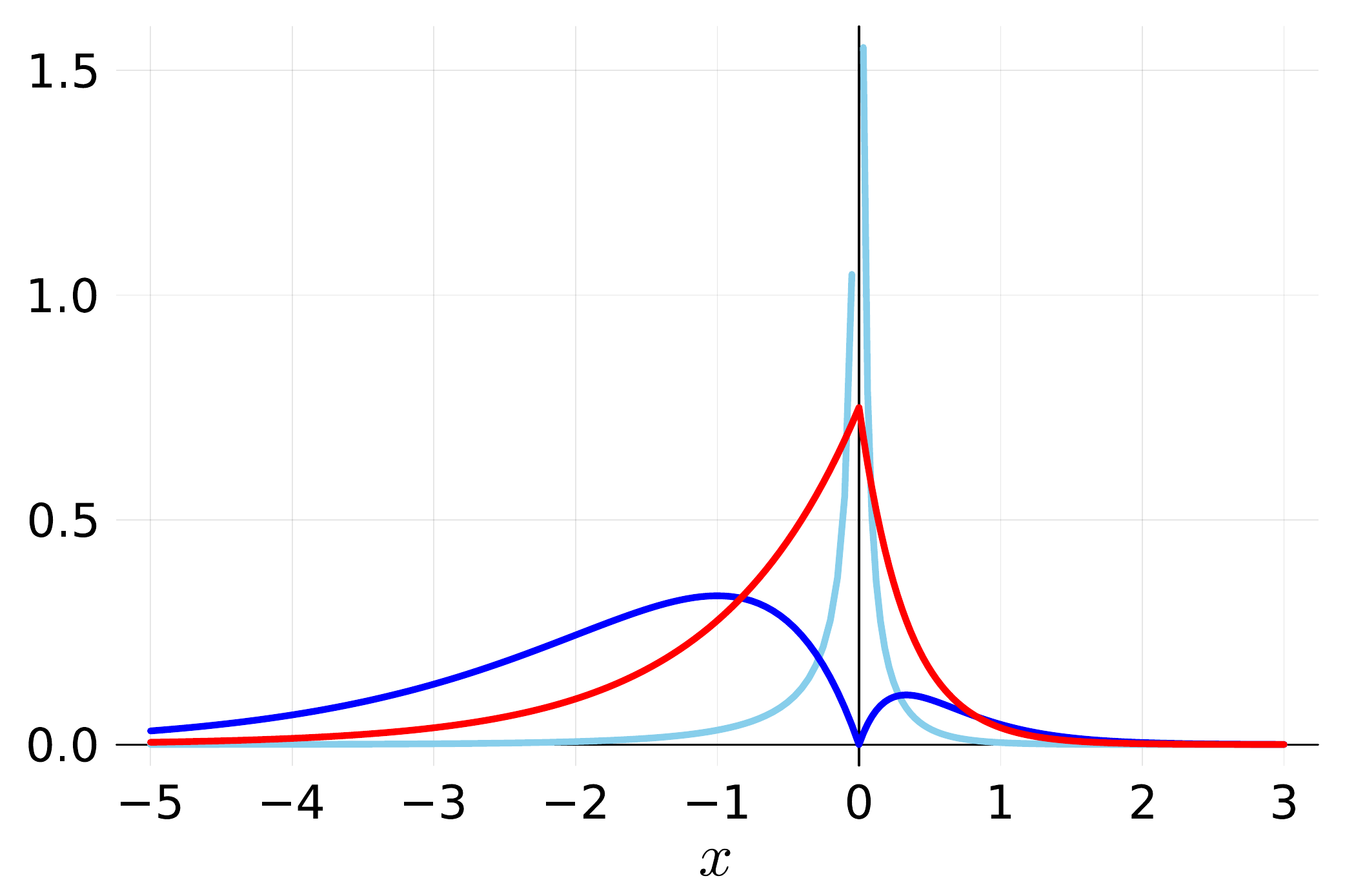}
    \caption{Approximative density of the DFE based on an exponential distribution with parameter 1 (red), a gamma distribution with shape parameter 2 and scale parameter 1 (blue), and a gamma distribution with shape parameter 0.15 and scale parameter 1 (light blue) for the negative selection coefficients.
    The average selection load is $\widehat\gamma=-2/3$ in case of the exponential distribution and $\widehat\gamma=-26/15$ and $\widehat\gamma\approx-0.058$, respectively, for the gamma distributions.
    }
    \label{fig:dfe}
\end{figure}

Moreover, \cref{prop:DFE} provides an approximation of the present model, where ancestral and derived alleles are kept distinct but not the genetic types. We consider the general Wright-Fisher graph model with average mutation load $\widehat\theta_\eff$ and DFE represented by $H_\mathrm{dfe}(\gamma)$. In each locus, lump together all boundary states into one generic vertex state $0$, which represents "ancestral".  Attach to $0$ a single, directed edge $e_0$ of length $1$ which returns to  $0$ at the endpoint. At exponential mutation rate $\widehat\theta_\eff$ a derived allele appears at frequency $1/N $ on the edge, following the path of a Wright-Fisher diffusion with selection coefficient drawn randomly from the distribution $H_\mathrm{dfe}$. At the time of extinction or fixation the derived returns to the state of ancestral. The same dynamics applies independently over the $L$ loci. 
The unfolded AFS of derived alleles in steady-state should then be well approximated by
\[
2\widehat\theta_\eff \int_\R \omega_\gamma \frac{1-e^{-2\gamma(1-y)}}{2\gamma y(1-y)}\, H_\mathrm{dfe}(d\gamma),\quad 0<y<1.
\]
The category of probabilistic models briefly described here is known as the Poisson random field approach in population genetics, see e.g.\ \citet{Sawyer1992,Mugal2014,Kaj2016}. \Cref{prop:DFE} provides general support for the Poisson random field, and \cref{prop:DFE} ii) even justifies the further simplified approximation where $H_\mathrm{dfe}$ is a one-point distribution with unit mass on a fixed $\gamma=\widehat\gamma<0$ for each mutation. 

\paragraph{The distribution of fitness effects of polymorphisms}

While in our setting mutation is linked to the boundary states, selection naturally acts on the interior of the graph. This suggests introducing the relevant distribution function for selection coefficients of segregating polymorphisms
\begin{equation}\label{def:Hgamma}
    \Tilde{H}_\mathrm{pdfe}(\gamma)
    =\frac{1}{L}\sum_{j=1}^L
    \sum_{u,v\in\Vc}  1_{\{\gamma_{uv}^j\le \gamma\}}
    \int_0^1 \mu^j_N(u,v,y)\,dy,\quad -\infty<\gamma<\infty.
\end{equation}
The function $\Tilde{H}_\mathrm{pdfe}$ is a (improper) discrete distribution on the real line with a finite number of jumps at each of the values $\pm\gamma_{uv}^j$ of the graph. Since  
\[
\Tilde{H}_\mathrm{pdfe}(\gamma)\to \Tilde{H}_\mathrm{pdfe}(\infty)=
\frac{1}{L}\sum_{j=1}^L\frac{\Omega^j_N-1}{\Omega^j_N} <1,\quad
\]
as $\gamma\to\infty$, the distribution is defect. Conditioning on polymorphic states yields a proper probability distribution $H_\textrm{pdfe}(\gamma)= \Tilde{H}_\mathrm{pdfe}(\gamma)/\Tilde{H}_\mathrm{pdfe}(\infty)$. 
It can be shown that \cref{prop:DFE} holds analogously for $H_\textrm{pdfe}(\gamma)$.

\begin{proof}[Proof of \cref{prop:DFE} for $H_\textrm{pdfe}$]
Summing over pairs of vertices, $H_\mathrm{pdfe}$ can be represented as
\begin{align*}
    H_\mathrm{pdfe}(\gamma)
    &=\Tilde{H}_\mathrm{pdfe}(\infty)^{-1} \frac{1}{L}\sum_{j=1}^L
    \sum_{\langle u,v \rangle\in\Pc}\frac{2}{L\Omega^j_N}
    \eta_u^j\theta_{uv}\Big\{(1+\ln N +K_{\gamma_{uv}^j}) 1_{\{\gamma_{uv}^j\le \gamma\}}\\
    &\hskip 3cm  +(1+\ln N -K_{\gamma_{uv}^j}) e^{2\gamma_{uv}^j}1_{\{-\gamma_{uv}^j\le \gamma\}}
    \Big\}.
\end{align*}
For each fixed negative selection coefficient $\gamma=\gamma_{uv}^j<0$, the jump size $H_\mathrm{pdfe}(\gamma)-H_\mathrm{pdfe}(\gamma-)$ is proportional to $1+\ln N +K_\gamma$, whereas the jump on the side of positive selection coefficients, $H_\mathrm{pdfe}(-\gamma)-H_\mathrm{pdfe}(-\gamma-)$, equals the same proportionality constant times $(1+\ln N -K_\gamma)e^{2\gamma}$. Hence property i) in \cref{prop:DFE} for $H_\mathrm{pdfe}$ follows if we can show
\[
1+\ln N+K_\gamma\ge (1+\ln N -K_\gamma)e^{2\gamma},
\]
that is
\begin{equation} \label{eq:Kg_inequality}
-K_\gamma  
\le \frac{1-e^{2\gamma}}{1+e^{2\gamma}}
 \,(1+\ln N),\quad \gamma<0,
\end{equation}
or, equivalently,
\begin{equation} \label{eq:Kg_inequality_plus}
K_\gamma  
\le \frac{1-e^{-2\gamma}}{1+e^{-2\gamma}}
 \,(1+\ln N),\quad \gamma>0.
\end{equation} 
For $0<\gamma<1$, using \cref{rem:K}, we have e.g. 
\[
K_\gamma\le \gamma 
\le\frac{1-e^{-2\gamma}}{1+e^{-2\gamma}}
 \,(1+\ln 2)
\]
and hence (\ref{eq:Kg_inequality_plus}) holds for $N\ge 2$.
Let $C$ be a constant such that $|\gamma_{uv}^j|\le C$ for all $1\le j\le L$ and $u,v\in \Vc$. 
For $1\le \gamma\le C$, by Remark \ref{rem:K},  
\[
K_\gamma
\le
\gamma_e+\ln 2\gamma 
\le \frac{1-e^{-2\gamma}}{1+e^{-2\gamma}} +\ln 2\gamma
\le \frac{1-e^{-2\gamma}}{1+e^{-2\gamma}}\big(1+\ln (4 C^2)\big),
\]
and hence (\ref{eq:Kg_inequality_plus}) holds for $N\ge 4C^2$.

For claim ii) we have, 
\begin{align*}
    \widehat\gamma
    &=\Tilde{H}_\mathrm{pdfe}(\infty)^{-1}\frac{1}{L}\sum_{j=1}^L
    \sum_{\langle u,v \rangle\in\Pc}\frac{2}{L\Omega^j_N}
    \eta_u^j\theta_{uv}\Big\{(1+\ln N +K_{\gamma^j_{uv}}) \gamma_{uv}^j \\
    &\hskip 3cm  -(1+\ln N -K_{\gamma^j_{uv}}) \gamma_{uv}^j e^{2\gamma^j_{uv}}
         \Big\}.
\end{align*}
The quantity in curly brackets is nonpositive if
\[
\gamma K_\gamma \le 
\gamma \,\frac{1-e^{-2\gamma}}{1+e^{-2\gamma}}\,(1+\ln N),
\]
which is true for all $\gamma=\gamma_{uv}^j, j=1,\ldots,L$, and $N$ sufficiently large by the inequalities (\ref{eq:Kg_inequality}) and (\ref{eq:Kg_inequality_plus}). Hence, it follows that $\widehat\gamma\le 0$. 
\end{proof}

In conclusion, \cref{prop:DFE} demonstrates that both DFEs, that of novel mutations as well as that of segregating polymorphisms, are at equilibrium skewed towards the negative selection regime. 

\subsection*{Acknowledgements}

The authors thank Nicolas Lartillot and Thibault Latrille for valuable discussions about the use of mutation-selection models for protein-coding sequence evolution. CFM has received financial support from the Knut and Alice Wallenberg Foundation (2014/0044 to Hans Ellegren) and the Swedish Research Council (2013-8271 to Hans Ellegren).

%% References with bibTeX database:
\bibliography{manuscript.bib}

\end{document}